\newtheorem{thm}{Theorem}
\newtheorem{lem}[thm]{Lemma}
\newtheorem{prop}[thm]{Proposition}
\theoremstyle{definition}
\newtheorem{defn}[thm]{Definition}
\theoremstyle{remark}
\newcommand{\bsigma}{{\bf \Sigma}}
\newcommand{\pp}{^{(p)}}
\newcommand{\bs}{{\bf s}}
\newcommand{\uP}{\underline{\bP}}
\newcommand{\utheta}{\underline{\theta}}
\DeclareMathOperator {\var}{Var}
\DeclareMathOperator {\diag}{diag}
\DeclareMathOperator {\tr}{Tr}
\definecolor{pink}{cmyk}{0, 1, 0, 0}
\newcommand{\swap}[2]{(#1)_{\mathsf{swap}(#2)}}
\newcommand{\fdr}{\mathsf{FDR}}
\newcommand{\pow}{\mathsf{PWR}}
\newcommand{\esd}{\mathsf{ESD}}
\newcommand{\cD}{\mathcal{D}}
\newcommand{\cF}{\mathcal{F}}
\newcommand{\cN}{\mathcal{N}}
\newcommand{\bM}{\mathbf{M}}
\newcommand{\bP}{\mathbf{P}}
\newcommand{\bX}{\mathbf{X}}
\newcommand{\bY}{\mathbf{Y}}
\newcommand{\bx}{\mathbf{x}}
\newcommand{\by}{\mathbf{y}}
\newcommand{\bt}{\mathbf{t}}
\newcommand{\sd}{\mathsf{d}}
\newcommand{\bone}{\mathbf{1}}
\newcommand{\R}{\mathbbm{R}}
\newcommand{\p}{\mathbbm{P}}
\newcommand{\E}{\mathbbm{E}}
\newcommand{\1}{\mathbbm{1}}
\newcommand{\pn}{\p_{\kern-0.25em n}}
\newcommand{\pnm}{\p_{\kern-0.25em n,m}}
\newcommand{\psubm}{\p_{\kern-0.25em m}}
\newcommand{\psubp}{\p_{\kern-0.25em p}}
\newcommand{\cfi}{\cF_{\kern-0.25em \infty}}
\newcommand{\argmin}{\mathop{\mathrm{argmin}}}
\newlength{\minipagewidth}
\begin{document}

\begin{frontmatter}

\title{Power analysis of knockoff filters for correlated designs}
\runtitle{SPower analysis of knockoffs}

 \author{\fnms{Jingbo} \snm{Liu}\thanksref{t1}\ead[label=jingbo]{jingbo@mit.edu}\and
 \fnms{Philippe} \snm{Rigollet}\thanksref{t2}\ead[label=rigollet]{rigollet@math.mit.edu}
}

\affiliation{Massachusetts Institute of Technology}

\thankstext{t1}{Supported by the IDSS Wiener Fellowship.}
\thankstext{t2}{Supported in part by NSF awards IIS-BIGDATA-1838071, DMS-1712596 and CCF-TRIPODS-1740751; ONR grant N00014-17-1-2147.}

\address{{Jingbo Liu}\\
Institute for Data, Systems, and Society\\
  Massachusetts Institute of Technology\\
{77 Massachusetts Avenue,}\\
{Cambridge, MA 02139-4307, USA}\\
\printead{jingbo}
}

\address{{Philippe Rigollet}\\
{Department of Mathematics} \\
{Massachusetts Institute of Technology}\\
{77 Massachusetts Avenue,}\\
{Cambridge, MA 02139-4307, USA}\\
\printead{rigollet}
}


\runauthor{Liu \& Rigollet}

\begin{abstract}
The knockoff filter introduced by Barber and Cand\`es 2016 is an elegant framework for controlling the false discovery rate in variable selection. 
While empirical results indicate that this methodology  is not too conservative,
there is no conclusive theoretical result on its power. When the predictors are i.i.d.\ Gaussian, it is known that as the signal to noise ratio tend to infinity, the knockoff filter is consistent in the sense that one can make FDR go to 0 and power go to 1 simultaneously. In this work we study the case where the predictors have a general covariance matrix $\bsigma$. We introduce a simple functional called \emph{effective signal deficiency (ESD)} of the covariance matrix $\bsigma$
that predicts consistency of various variable selection methods. 
In particular,
ESD reveals that the structure of the precision matrix $\bsigma^{-1}$
plays a central role in consistency and therefore, so does the conditional independence structure of the predictors. To leverage this connection, we introduce \emph{Conditional Independence knockoff}, a simple procedure that is able to compete with the more sophisticated knockoff filters and that is defined when the predictors obey a Gaussian tree graphical models (or when the graph is sufficiently sparse).  Our theoretical results are supported by numerical evidence on synthetic data.
\end{abstract}


\end{frontmatter}

%

\section{Introduction}

Variable selection is a cornerstone of modern high-dimensional statistics and, more generally, of data-driven scientific discovery.  Examples include selecting a few genes correlated to the incidence of a certain disease, 
or discovering a number of demographic attributes correlated to crime rates. 

A fruitful theoretical framework to study this question is the  linear regression model in which we observe $n$ independent copies of the pair $(X,Y) \in \R^p\times \R$ such that
$$
Y= X^\top \theta +  \xi \,,
$$
where $\theta \in \R^p$ is an unknown vector of coefficients, and $\xi\sim \cN(0, n\sigma^2)$ is a noise random variable. Throughout this work we assume that $X \sim \cN({\bf 0}, \bsigma)$ for some known covariance matrix $\bsigma$. Note that for notational simplicity our linear regression model is multiplied by $\sqrt{n}$ compared to standard scaling in high-dimensional linear regression~\cite{BicRitTsy09}. Clearly, this scaling, also employed in~\cite{javanmard2014hypothesis} has no effect on our results.
In this work, we consider asymptotics where $n/p \to \delta$ is fixed.

In this model, a variable selection procedure is a sequence of test statistics $\psi_1, \ldots, \psi_p \in \{0,1\}$ for each of the  hypothesis testing problem
\begin{equation}
\label{EQ:testj}
H_0^{(j)}\,:\, \theta_j=0 \,, \qquad  \text{vs.} \qquad H_1^{(j)}\,:\, \theta_j\neq 0 \,, j=1\, \ldots, p
\end{equation}
When $p$ is large, a simultaneous control of all the type~I errors leads to overly conservative procedures that impedes statistical significant variables, and ultimately, scientific discovery. The False Discovery Rate (FDR) is a less conservative alternative to global type~I error. The FDR of a  procedure $(\psi_1, \ldots, \psi_p)$ is the expected proportion of erroneoulsy rejected tests. Formally
\begin{equation}
\label{EQ:defFDR}
\fdr:=\E\Big[\frac{\#\{j\,:\,\psi_j=1,\theta_j=0\}}{\#\{j\,:\,\psi_j=1\}\vee 1}\Big]
\end{equation}

Since its introduction more than two decades ago, various procedures have been developed to provably control this quantity under various assumptions. Central among these is the Benjamini-Hochberg procedure which is guaranteed to lead to a desired FDR control under the assumption that the design matrix $\bX=(X_1, \ldots, X_n)^\top \in \R^{n \times p}$ formed by the concatenation of the $n$ column vectors $X_1, \ldots, X_n$ is deterministic and orthogonal~\cite{benjamini1995controlling, storey2004strong}.

In the presence of correlation between the variables, that is when the design matrix fails to be orthogonal, the problem becomes much more difficult. Indeed, if the variables $X_j$ and $X_k$ are highly correlated, any standard procedure will tend to output a similar coefficient for both, or in the case of Lasso for example, simply chose one of the two variables rather than both.

Recently, the knockoff filter of Barber and Cand\`es \cite{barber2015controlling,candes2018panning} has emerged as a competitive alternative to the Benjamini-Hochberg procedure for FDR control in the presence of correlated variables, and has demonstrated great empirical success \cite{katsevich2017,sesia2019}. The terminology ``knockoffs" refers to a vector $\tilde X \in \R^{p}$ that is easy to mistake for the original vector $X$ but is crucially independent of $Y$ given $X$. Formally, $\tilde X$ is a \emph{knockoff} of $X$ if (i) $\tilde X$ is independent of $Y$ given $X$ and (ii) for any $S \subset \{1, \ldots, p\}$, it holds
\begin{equation}
\label{EQ:defswap}
\swap{X,\tilde X}{S}{\buildrel d \over =}(X, \tilde X)
\end{equation}
where ${\buildrel d \over =}$ denotes equality in distribution and $\swap{X,\tilde X}{S}$ is the vector $Z \in \R^{2p}$ with $j$th coordinate given by
$$
Z_j=\left\{
\begin{array}{ll}
X_j & \text{if}\ j \in (\{1, \ldots, p\} \setminus S)\cup (S+\{p\})\\
\tilde X_j & \text{if}\ j \in S \cup (\{p+1, \ldots, 2p\}\setminus (S+\{p\}) \\
\end{array}\right.
$$
In words, for any vector $\R^{2p}$, the operator $\swap{\cdot}{S}$ swaps each coordinate in $j \in S$ with the coordinate $j+p$ and leaves the other coordinates unchanged. We call a \emph{knockoff mechanism} any probability family of probability distributions $(P_x, x\in \R^p)$ over $\R^p$ such that $\tilde X \sim P_{X}$ is a knockoff of $X$. Since the knockoff is constructed independently of $Y$, it serves as a benchmark to evaluate how much of the coefficient of a certain variable is due to its correlation with $Y$ and how much of it is due to its correlation with the other variables.

With this idea in mind, the knockoff filter is then constructed from the following four steps:
\begin{enumerate} 
\item {\bf Generate knockoffs.} For $i=1, \ldots, n$,  given $X_i\in\R^p$, generate knockoff $\tilde X_i \sim P_{X_i}$ and form the $n \times 2p$ design matrix $[\bX, \tilde \bX]$ where $\tilde \bX=(\tilde X_1, \ldots, \tilde X_n)^\top \in \R^{n \times p}$ is obtained by concatenating the knockoff vectors. 
\item {\bf Collect scores for each variable.} Define the $2p$ dimensional vector \footnote{Regression problems with knockoffs are $2p$ dimensional rather than $p$ dimensional. To keep track of this fact, we use $\underline{\cdot}$ to denote a $2p$ dimensional vector.} $\hat \utheta$ as the Lasso estimator
\begin{equation}
\label{EQ:deflasso}
\hat{\utheta}=\argmin_{\utheta \in \R^{2p} }\frac{1}{2n}\|{\bf Y}-[{\bf X},\tilde{\bf X}]\utheta\|_2^2+\lambda\|\utheta\|_1\,,
\end{equation}
where $\bY=(Y_1, \ldots, Y_n)^\top$ is the response vector and, collect the differences of absolute coefficients between variables and knockoffs into a set $\cD=\{|\Delta_j|\,, j =1, \ldots, p\}\setminus\{0\}$ where $\Delta_j$'s are any constructed statistics satisfying certain symmetry conditions \cite{barber2015controlling}.
A frequent choice is
\begin{align}
    \Delta_j:=|\hat{\underline{\theta}}_j|-|\hat{\underline{\theta}}_{j+p}|\,, j=1,\dots,p.
    \label{e_wj}
\end{align}
In this work we replace $\hat{\underline{\theta}}$ by the debiased version $\hat{\theta}^u$ (see \eqref{EQ:defdeblasso} ahead) in the above definition.
\item {\bf Threshold.} Given a desired FDR bound $q \in (0,1)$, define the threshold
\begin{align}
T:=\min\left\{
t\in\cD\colon \frac{\#\{j\colon \Delta_j\le -t\}}{\#\{j\colon \Delta_j\ge t\}\vee 1}\le q
\right\}.
\end{align}
\item {\bf Test.} For all $j=1, \ldots, p$, answer the hypothesis testing problem~\eqref{EQ:testj} with test $$\Psi_j=\1\{\Delta_j \ge T\}\,.$$
\end{enumerate}

This procedure is guaranteed to satisfy $\fdr\le q$~\cite[Theorem~1]{barber2015controlling} no matter the choice of knockoffs. Clearly, $\tilde X=X$ is a valid choice for knockoffs but it will inevitably lead to no discoveries. The ability of a variable selection procedure $(\psi_1, \ldots, \psi_p)$ to discover true positive is captured its \emph{power} (or true positive proportion) defined as
$$
\pow=\E\Big[\frac{\#\{j\,:\, \psi_j=1, \theta_j\neq 0\}}{\#\{j\,:\, \theta_j\neq 0\}}\Big]
$$
 Intuitively, to maximize power, knockoffs should be as uncorrelated with $X$ as possible while satisfying the exchangeability property~\eqref{EQ:defswap}. Following this principle,
%
%
various knockoff mechanisms have been proposed in different settings, 
which typically involves solving an optimization to minimize a heuristic notion of correlation~\cite{barber2015controlling,candes2018panning,deep2018}. Because of this optimization problem, knockoff mechanisms with analytical expressions are rare, with the exception of the equi-knockoff \cite{barber2015controlling} and   metropolized knockoff sampling \cite{bates2019metropolized}).
Partly due to this, the theoretical analysis of the power of the knockoff filter has been very limited,
even in the Gaussian setting.
In the special case where $X \sim \cN(0, D)$ for some \emph{diagonal} matrix, i.e. when the variables are independent, one can simply take $\tilde X \sim \cN(0, D)$ independent of $X$. In this case, the power of the knockoff filter tends to 1 as the signal-to-noise ratio tends to infinity~\cite{weinstein2017power}.

When predictors are correlated, \cite{fan2019rank} proved a lower bound on the power, 
where the limiting  power as $n\to \infty$ is bounded below in terms of the number $p$ of predictors  and extremal eigenvalues of the covariance matrix of the true and knockoff variables. While this lower bound provides a sufficient condition for situations when the power tends to 1, it is loose in  certain scenarios.
For example, if all predictors are independent except that two of them are almost surely equal, 
then the minimum eigenvalue of the covariance matrix is zero and yet, experimental results indicate that the FDR and the power of the knockoff filter are almost unchanged.

\noindent{\bf Our contribution.} In this paper, we revisit the statistical performance of the knockoff filter $X \sim \cN(0, \bsigma)$ and characterize the situation the knockoff filter is \emph{consistent}, that is when its FDR tends to 0 and its power tends to 1 simultaneously.
More specifically, 
under suitable limit assumptions, 
we show that the knockoff filter is consistent if and only if the empirical distribution of the diagonal elements of the precision matrix of ${\bf \underline{P}}:=\underline{\bsigma}^{-1}$ converges to $0$, 
where $\underline{\bsigma}$ denotes the covariance matrix of $[X,\tilde X] \in \R^{2p}$ converges to a point mass at 0. In turn, we propose an explicit criterion, called \emph{effective signal deficiency} defined formally in \eqref{e_lp} to practically evaluate consistency or lack thereof. Here the term ``signal" refers to the covariance structure $\bsigma$ of $X$ and the effective signal deficiency essentially how much weak such a signal should be for a knockoff mechanism to be consistent.
%

%

A second contribution is to propose a new knockoffs mechanism, called \emph{Conditionally Independent Knockoffs} (CIK),
which possesses both simple analytic expressions and excellent experimental performance.
CIK does not exist for all $\bsigma$,
but we show its existence for tree graphical models or other sufficiently sparse graphs.
Note that in practice, 
the so-called model-X knockoff filter requires the knowledge of $\bsigma$, 
an estimation of which is often prohibitive except when the graph has sparse or tree structures. 
CIK has simple explicit expressions of the effective signal deficiency for tree models,
since the empirical distribution of the diagonals of $\bsigma^{-1}$ is the same as that of $({\bf P}_{jj}^2\bsigma_{jj})_{j=1}^p$.
We remark that CIK is different than \emph{metropolized knockoff sampling} studied in \cite{bates2019metropolized} (originally appeared in \cite[Section~3.4.1]{candes2018panning}), even in the case of Gaussian Markov chains.
The latter exists for generic distributions and is computationally efficient for Markov chains.

\medskip

{\bf Notation.} We write $[n]:=\{1,\dots,n\}$ and $\bone$ to denote the all-ones vector. For any vector $\theta$, let $\|\theta\|_0$ and $\|\theta\|_1$ denote its $\ell_0$ and $\ell_1$ norms. Given a vector $\bx$, we denote by $\diag(\bx)$ the diagonal matrix whose diagonal elements are given by the entries of $\bx$ and for a matrix $\bM$, we denote by $\diag(\bM)$ the vector whose entries are given by the diagonal entries of $\bM$.
For a standard Gaussian random variable $\xi \sim \cN(0,1)$ and any real number $r$, we denote by ${\rm Q}(r)=\p[\xi>r]$, the Gaussian tail probability. Finally we use the notation ${\bf  A}\preceq{\bf B}$ to indicate the loewner order: ${\bf B-A}$ is positive semidefinite.

\section{Existing work}\label{sec_pre}

We focus this discussion on the case of Gaussian design $X$. In this case,  the exchangeability condition~\eqref{EQ:defswap} implies that  $[X,\tilde{X}]$ has a covariance matrix of the form
\begin{align}
\underline{\bsigma}=
\begin{bmatrix}
\bsigma&\bsigma-\diag(\bs)\\
\bsigma-\diag(\bs)& \bsigma
\end{bmatrix}.
\label{e_usigma}
\end{align}
As observed in \cite{barber2015controlling}, positive semi-definiteness of this matrix is equivalent to
\begin{equation}
\label{e7}
0 \preceq \diag(\bs)\preceq 2 \bsigma
\end{equation}
For some $\bs \in \R^p$. As a result, finding a knockoff mechanism consists in finding $\bs$.

The seminal work~\cite{barber2015controlling}\cite{candes2018panning} introduce the following knockoff mechanisms:

\noindent{\sc Equi-knockoffs:} The vector $\bs$ is chosen of the form $\bs = s\bone$ for some $s \ge 0$. In light of~\eqref{e7} the smallest value possible for $s$ is $2\lambda_{\min}(\bsigma)$. Assuming the normalization $\diag(\bsigma)=\bone$, \cite{candes2018panning}  recommend choosing
\begin{align}
s=2\lambda_{\min}(\bsigma)\wedge1, 
\label{e_equi}
\end{align}
with the goal of minimizing the correlation between $X_j$ and $\tilde{X}_j$. 

\noindent{\sc SDP-knockoffs:}   The vector $\bs$ is chosen to solve the following semidefinite program:
\begin{equation}
\begin{array}{rl}
\min \ \|\diag(\bsigma)-\bs\|_1\quad 
\text{s.t.}\quad  &0 \preceq \diag(\bs) \preceq \diag(\bsigma)\nonumber\\
&\phantom{0 \preceq }\diag(\bs)\preceq 2\bsigma.
\end{array}
\label{e_9}
\end{equation}

\noindent{\sc ASDP-knockoffs:} 
Assume the normalization $\diag(\bsigma)={\bf 1}$.
Choose an approximation $\bsigma_{\sf a}$ of $\bsigma$ (see \cite{candes2018panning}) and solve:
\begin{align}
&\textrm{minimize }\|{\bf 1-\hat{s}}\|_1
\nonumber
\\
&\textrm{subject to }
{\bf \hat{s}\ge 0, \diag(\hat{s})}
\preceq  2\bsigma_{\sf a}
\nonumber
\end{align}
and then solve:
\begin{align}
&\textrm{minimize }\gamma
\nonumber\\
&\textrm{subject to }
\diag(\gamma \hat{\bf s})
\preceq 2\bsigma
\nonumber
\end{align}
and put $\bf s=\gamma \hat{s}$.

We do not discuss other knockoff constructions, 
such as the exact construction \cite[Section~3.4.1]{candes2018panning} and deep knockoff \cite{deep2018},
which mostly target at general non-Gaussian distributions.



As alluded, previously, \cite{weinstein2017power} performed power analysis  in the linear (fixed $n/p$) regime
for $\bsigma={\bf I}_p$,
in which case all the  above knockoff mechanisms give the same answer of $\bf s=1$.
For a general $\bsigma$,
\cite{fan2019rank}  derived lower bounds on the power in terms of the minimum eigenvalue of the extended covariance matrix $\underline{\bsigma}$ (no specific knockoff mechanism is assumed).

\section{Overview of the main results}
In the paper, we focus on the so-called linear regime where the sampling $n/p$ converges to a constant $\delta$. We allow for general $\bsigma$ and for simplicity, rather than using the Lasso estimator $\hat \theta$ defined in~\eqref{EQ:deflasso}, we employ a debiased version~\cite{ZhaZha14, vandegeer2014, javanmard2014hypothesis}
\begin{equation}
\label{EQ:defdeblasso}
\hat{\theta}^u:=\hat{\theta}+\frac{\sf d}{n}\bsigma^{-1}{\bf X}^{\top}({\bf Y-X}\hat{\theta}),
\end{equation}
where $1/{\sf d}=1-\|\hat{\theta}\|_0/n$.
To allow for asymptotic results, we consider a sequence $\{(\bsigma\pp,\theta\pp)\}_{p\ge 1}$ where $\bsigma\pp$ are covariance matrices of size $m\pp\times m\pp$ and $\theta\pp \in \R^{\pp}$ are vectors of coefficients. Note that we will only consider the cases where $m\pp=p$ or $m\pp=2p$, depending on whether we consider 
predictors with or without knockoffs.

At first glance, it is unclear that for such general sequences, any meaningful result can be said about the debiased Lasso estimator $\hat \theta^u$ defined in~\eqref{EQ:defdeblasso}. To overcome this obvious limitation, 
%
we consider the asymptotic setting where a \emph{standard distributional limit} exists in the sense~\cite[Definition~4.1]{javanmard2014hypothesis}.
\begin{defn}[Standard distributional limit]\label{defn_sdl}
Assume constant sampling rate $n\pp=\delta m\pp$. A sequence $\{(\bsigma\pp,\theta\pp)\}_{p\ge 1}$ is said to have a \emph{standard distributional limit}
with sparsity $(\alpha, \beta)$, 
if \\
(i) there exist $\tau \neq 0$ deterministic and  ${\sf d}$, possibly random, such that the empirical measure 
$$
\frac{1}{m\pp}\sum_{j=1}^{m\pp} \delta_{\big(\theta_{j},\frac{\hat{\theta}_j^u-\theta_{j}}{\tau},(\bsigma^{-1})_{jj}\big)\pp} 
$$
converges almost surely weakly to a probability measure $\nu$ on $\mathbb{R}^3$ as $p \to \infty$. Here, $\nu$ is the probability distribution of $(\Theta,\Upsilon^{1/2}Z,\Upsilon)$, where $Z\sim \mathcal{N}(0,1)$, and $\Theta$ and $\Upsilon$ are some random variables independent of $Z$. Moreover, we ask that\\
(ii)  as $p \to \infty$, it holds almost surely that
$$
\frac1p\|\theta\pp\|_0\to
\alpha
:=\mathbb{P}[|\Theta|>0]\,, \qquad \text{and} \qquad \frac1p\|\theta\pp\|_1 
\to
\beta:=
\mathbb{E}[|\Theta|]\,.
$$
\end{defn}

Note that (i) implies that  $\liminf_{p\to\infty}\|\theta\pp\|_1/p\ge \mathbb{E}[|\Theta|]$,
and $\liminf_{p\to\infty}\|\theta\pp\|_0/p \ge \mathbb{P}[|\Theta|>0]$,
almost surely. We further impose that  equalities are achieved in (ii).

As mentioned in \cite{javanmard2014hypothesis}, characterizing instances having a standard distributional limit is highly nontrivial. 
Yet, at least, the definition is non-empty since it contains the case of standard Gaussian design. 
Moreover, a non-rigorous replica argument indicates that the standard distributional limit exists as long as a certain functional defined on $\mathbb{R}^2$ has a differentiable limit \cite[Replica Method Claim~4.6]{javanmard2014hypothesis}, which is always satisfied for block diagonal $\bsigma$ where the empirical distribution of the blocks converges.

%
We remark that in the sparse regime where $\|\theta\|_0=o(p)$,
rigorous  results, that do not appeal to the replica method,  show that the weak convergence of the distribution of
$\{(\underline{\theta}_{j},\,\underline{\bP}_{jj})\}_{j=1}^p$is essentially sufficient for the existence of a standard distributional limit (\cite[Theorem~4.5]{javanmard2014hypothesis}),
although the present paper does not concern that regime.

We now introduce the key criterion to characterize consistency of a knockoff mechanism and more generally of a variable selection procedure.

\begin{defn}[Effective signal deficiency]
For a given variable selection procedure,
$\esd\pp\ge 0$ is a function of $\bsigma\pp$ with the following property:
for the class of  sequences $(\theta\pp,\bsigma\pp)_{p\ge 1}$ satisfying suitable distributional limit conditions,
vanishing
ESD is equivalent to consistency of the test:
$$
\esd:=\limsup_{p\to \infty}\esd\pp\to 0 \quad \iff \quad \limsup_{p \to \infty}\big\{ \fdr\pp + (1-\pow\pp)\big\}\to 0\,.
$$
\end{defn}

When we consider knockoff filters, ESD is frequently expressed in terms of the extended covariance matrix $\underline{\bsigma}$, which is in turn a function of $\bsigma$ for a given knockoff mechanism.
In that setting, the ``suitable distributional limit conditions'' in the above definition requires that the sequence of extended instances  $(\underline{\theta}\pp,\underline{\bsigma}\pp)_{p\ge 1}$ has a standard distributional limit. 

Note that by definition, ESD is not unique, 
and our goal is to find simple representations of its equivalence class.
ESD is a potentially useful concept in comparing or evaluating different ways of generating knockoff matrices.
As an analogy, think of the various notions of convergences of probability measures.
A sequence of probability measures may converge in one topology but not in another.
Similarly, one may cook up different functionals of the covariance matrix, such as $\lim_{p\to \infty}p\tr^{-1}(\bsigma)$ and $\lim_{p\to \infty}p\tr(\bsigma^{-1})$,
which both intuitively characterize some sort of signal deficiency since they tend to be small when the signal gets stronger. 
However, they are not equivalent, and the second convergence to $0$ is \emph{stronger} in the sense that the first must vanish when the second vanishes.
ESD is intended to be the correct notion of ``convergence'' that characterizes FDR tending to $0$ and power tending to $1$.

Of course, by definition it is not obvious that a succinct expression of such an effective signal deficiency exists.
Remarkably, we find that the effective signal deficiency can be characterized by the convergence of certain empirical distribution derived from $\bsigma$.
The effective signal deficiency for various (old and new) variable selection procedures is as follows:
\medskip

\noindent{\sc Lasso:} The debiased Lasso~\cite{javanmard2014hypothesis} is a popular method for high-dimensional statistical inference. It is implemented by first computing a Lasso estimator
$$
\hat{\theta}=\argmin_{t\in{\R}^p}\left\{
\frac{1}{2n}\|{\bf Y}-{\bf X}\theta\|^2
+\lambda\|\theta\|_1
\right\}
$$
where $\lambda>0$ can be chosen as any fixed positive number independent of $p$.
Instead of a direct threshold test on $\hat{\theta}$,
we first compute an ``unbiased version'' $\hat{\theta}^u$ defined in \eqref{EQ:defdeblasso}, as in \cite{javanmard2014hypothesis}, and pass a threshold to select non-nulls.
We show in Theorem~\ref{PROP_LASSO} that we may chose
$$
\esd=\lim_{p \to \infty}d_{\sf LP}\big(\frac{1}{p}\sum_{j=1}^p\delta_{\bP\pp_{jj}}, \delta_0)\,,
$$
where $d_{\sf LP}$ denotes the L\'evy-Prokhorov distance between defined for any two measures $\mu$ and $\nu$ defined over a metric space as 
$$
d_{\sf LP}(\mu, \nu):=\inf\{\epsilon>0\,:\,\mu(A)\le \nu(A^{\epsilon})+\epsilon,\,
\nu(A)\le \mu(A^{\epsilon})+\epsilon,\forall A\}\,, 
$$
where $A^{\epsilon}$ denotes the $\epsilon$-neighborhood of $A$. In particular, we have 
\begin{align}
d_{\sf LP}\big(\frac{1}{p}\sum_{j=1}^p\delta_{\bP\pp_{jj}}, \delta_0):=\inf\left\{\epsilon>0\,\colon\, \frac{\#\{j\,:\,\bP\pp_{jj}\ge \epsilon\}}{p}\le \epsilon\right\}.
\label{e_lp}
\end{align}
The assumption of the standard distributional limit ensures the weak convergence of the empirical distribution of $(\bP\pp_{jj})_{j=1}^p$, and hence the convergence of \eqref{e_lp}. Hereafter, for any vector $x \in \R^m$, we use the shorthand (abusive) notation
$$
\|(x_{j})_j\|_{\sf LP}:=d_{\sf LP}\big(\frac{1}{m}\sum_{j=1}^m\delta_{x_{j}}, \delta_0)\,.
$$
This characterization if ESD is, in fact tight: $\esd \to 0$ is a necessary and sufficient condition for consistency of thresholded Lasso as a variable selection procedure (see Proposition~\ref{PROP:LBLASSO})

\medskip

\noindent{\sc General knockoff:}
for a general knockoff construction, including variational formulations such as {SDP-knockoffs},
it seems hopeless to find simple  expressions of ESD in terms of $\bsigma$.
Nevertheless, if $(\underline{\theta}\pp,\underline{\bsigma}\pp)$ has a standard distributional limit,
we can choose $\esd=\lim_{p \to \infty}\|(\uP\pp_{jj})_j\|_{\sf LP}$
where we recall that $\underline{\bf P}$ is the extended precision matrix of $[X, \tilde X]$.

\medskip
\noindent{\sc Equi-knockoff:} Specializing the above result to the equi-knockoff case, 
we see that we can choose 
$\esd=\lim_{p \to \infty}\lambda_{\max}(\bP\pp)$,
achieved when $s=a\lambda_{\min}(\bsigma)$ for any $a\in(0,2)$. Note that this is slightly different from the choice~\eqref{e_equi} prescribed in~\cite{barber2015controlling,candes2018panning}
where $s:=\min\{1,2\lambda_{\min}(\bsigma)\}$.

\medskip

\noindent{\sc CI-knockoff:} We introduce a new method for generating the knockoff matrix, called \emph{conditional independence knockoff} or CI-knockoff in short. If the Gaussian graphical model associated to $X$ is a tree, i.e. if the sparsity pattern of $\Sigma^{-1}$ corresponds to the adjacency matrix of a tree, then the conditional independence knockoff always exists and $\esd=\lim_{p \to \infty}\|(\uP\pp_{jj}\bsigma_{jj})_j\|_{\sf LP}$\,.
For example, in the independent case where $\bsigma$ is diagonal, we get $\esd=1$ which readily yields consistency.

The last knockoff construction, 
conditional independence knockoff,
appears to be new. 
It is both analytically simple and empirically competitive.
Comparing equi- and CI- knockoffs: the latter is more robust, since having a small fraction of $j$ with large $\bP_{jj}^2\bsigma_{jj}$ does not increase its ESD much. 
For example, two predictors are identical, then the ESD for conditional independence knockoff almost does not change, but equi-knockoff completely fails.
Compared to other previous knockoffs, we find that CI-knockoff usually shows similar or improved performance empirically, while being easier to compute and to manipulate.

\section{Baseline: Lasso with oracle threshold}

Consider a variable selection algorithm in which the Lasso parameters with absolute values above a threshold are selected,
and suppose that the threshold which controls the FDR is given by an oracle.
Note that the knockoff filter is based on the Lasso estimator but it must choose threshold in a data driven fashion. As a result, the Lasso with oracle threshold presents a strong baseline against which the performance of a given knockoff filter  should be compared.
Not surprisingly, and also as noted in \cite{fan2019rank}, although the knockoff filter has the advantage of controlling FDR, 
it usually has a lower power than Lasso with oracle threshold.
This fact will become more transparent as we determine their ESD.

\begin{thm}\label{PROP_LASSO}
Let $\lambda>0$ be arbitrary and let $\{(\bsigma\pp,\theta\pp)\}_{p\ge 1}$ admit a standard distributional limit, 
and denote the distributional limit by $(\Theta,\Upsilon^{1/2}Z, \Upsilon)$, where $Z\sim \mathcal{N}(0,1)$, and $\Theta$ and $\Upsilon$ are some random variables independent of $Z$. Assume further that   $L:=\lim_{p\to\infty}\|({\bf P}\pp_{jj})_j\|_{\sf LP}$ where the limit exists almost surely by the standard distributional limit assumption.
Consider the algorithm which selects $j$ for which $|\hat{\theta}_j^u|\ge t$,
where $\hat{\theta}^u$ is defined in \eqref{EQ:defdeblasso}. 
Then with the choice of $t=L^{1/4}$,
$$
\limsup_{p\to\infty}\{\fdr\pp+(1-\pow\pp)\}\le C_{L,\mu_{\Theta},\tau}
$$
where $\lim_{L\to 0}C_{L,\mu_{\Theta},\tau}=0$ for any  $\mu_{\Theta}$ with $\mathbb{P}[|\Theta|>0]>0$ and $\tau$ as in the definition of the standard distributional limit.
In particular, if $\delta>1$, then $\tau$ can be bounded in terms of $\sigma$, $\lambda$, $\delta$ and $\mu_{\Theta}$ only 
(independent of $\mu_{\Upsilon}$),
and hence $C_{L,\mu_{\Theta},\tau}$ in the above inequality can be replaced by $C_{L,\mu_{\Theta},\sigma,\lambda,\delta}$ where $\lim_{L\to 0}C_{L,\mu_{\Theta},\sigma,\lambda,\delta}=0$.
\end{thm}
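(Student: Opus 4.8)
The plan is to use the standard distributional limit to reduce both $\fdr\pp$ and $1-\pow\pp$ to integrals against the limiting law $\nu$ of $(\Theta,\Upsilon^{1/2}Z,\Upsilon)$, and then to show that the calibration $t=L^{1/4}$ makes those integrals small precisely when $L$ is small. First I would record the only consequence of the distributional limit that I use: with $w_j:=(\hat\theta_j^u-\theta_j)/\tau$, the empirical law of the triples $(\theta_j,w_j,\bP\pp_{jj})$ converges a.s. weakly to $\nu$, under which a coordinate behaves like $\Theta+\tau\Upsilon^{1/2}Z$ with $Z$ standard Gaussian independent of $(\Theta,\Upsilon)$ and $\Upsilon$ the weak limit of $\bP\pp_{jj}$. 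Since the false-discovery count $V\pp=\#\{j:\theta_j=0,\ |\hat\theta_j^u|\ge t\}$, the true-discovery count $S\pp$, and $R\pp=V\pp+S\pp$ are all empirical averages of indicator functionals of these triples, their normalizations converge a.s. to the corresponding $\nu$-integrals; because these proportions lie in $[0,1]$, bounded convergence upgrades this to convergence of $\fdr\pp=\E[V\pp/(R\pp\vee 1)]$ and $\pow\pp=\E[S\pp/\|\theta\pp\|_0]$, using $\|\theta\pp\|_0/p\to\alpha:=\p[|\Theta|>0]$.

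The one delicate point here is the atom of $\nu$ at $\{\Theta=0\}$ together with the discontinuity of the selection indicator on $\{|\Theta+\tau\Upsilon^{1/2}Z|=t\}$. I would handle both by sandwiching $\1\{\theta_j=0\}$ between $\1\{|\theta_j|\le\eta\}$ (for the upper bound on $V\pp$) and $\1\{|\theta_j|>\eta\}$ (for the lower bound on $S\pp$), applying the portmanteau theorem at continuity levels $\eta$ and at the threshold $t$ (a $\nu$-null set, since $\Upsilon^{1/2}Z$ is nonatomic on $\{\Upsilon>0\}$), and then sending $\eta\downarrow 0$ by dominated convergence. Next I would translate the hypothesis $L=\lim_p\|(\bP\pp_{jj})_j\|_{\sf LP}$, via continuity of $d_{\sf LP}$ under weak convergence and the explicit formula \eqref{e_lp}, into the single inequality $\p[\Upsilon\ge L]\le L$; this is the only channel through which the covariance structure enters. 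Splitting each $\nu$-integral according to whether $\Upsilon<L$ (good coordinates) or $\Upsilon\ge L$ (a set of mass at most $L$) is the crux. On good coordinates the effective noise standard deviation is at most $\tau L^{1/2}$ while the threshold is $t=L^{1/4}$, so a good null is falsely selected with probability at most $2\,{\rm Q}(1/(\tau L^{1/4}))$, a good non-null with $|\Theta|\ge 2t$ is missed with probability at most $2\,{\rm Q}(1/(\tau L^{1/4}))$, and non-nulls with $0<|\Theta|<2L^{1/4}$ contribute at most $\p[0<|\Theta|<2L^{1/4}]$. Collecting terms gives $\limsup_p V\pp/p\le\phi(L):=2\,{\rm Q}(1/(\tau L^{1/4}))+L$ and $\liminf_p S\pp/p\ge\alpha-\psi(L)$ with $\psi(L):=2\,{\rm Q}(1/(\tau L^{1/4}))+L+\p[0<|\Theta|<2L^{1/4}]$; since $t\to 0$ forces $\p[0<|\Theta|<2L^{1/4}]\to0$ and $1/(\tau L^{1/4})\to\infty$ kills the Gaussian tails, both $\phi(L),\psi(L)\to0$ as $L\to0$.

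Combining the two bounds via the monotonicity of $v\mapsto v/(v+s)$ yields
$$
\limsup_{p\to\infty}\{\fdr\pp+(1-\pow\pp)\}\le\frac{\phi(L)}{\alpha-\psi(L)}+\frac{\psi(L)}{\alpha}=:C_{L,\mu_\Theta,\tau},
$$
which depends only on $L$, on $\mu_\Theta$ (through $\alpha$ and $\p[0<|\Theta|<2L^{1/4}]$), and on $\tau$, and which tends to $0$ as $L\to0$ whenever $\alpha>0$. For the final assertion I would invoke the fixed-point/state-evolution characterization of $\tau$ underlying the standard distributional limit: when $\delta>1$ the effective noise $\tau$ admits an a priori bound in terms of $\sigma,\lambda,\delta,\mu_\Theta$ alone (independent of $\mu_\Upsilon$), so the Gaussian-tail terms in $\phi,\psi$ are controlled uniformly and $C_{L,\mu_\Theta,\tau}$ can be replaced by $C_{L,\mu_\Theta,\sigma,\lambda,\delta}$ with the same vanishing behavior.

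The hard part will be the reduction in the first two paragraphs, namely converting a.s.\ weak convergence of the empirical law of the (correlated) debiased coordinates into convergence of the \emph{ratio}-valued FDR and of the power, while correctly absorbing the atom at $\Theta=0$ and the threshold boundary; once that reduction is in place, everything is elementary Gaussian-tail bookkeeping calibrated by the exponent $1/4\in(0,1/2)$, chosen so that $L^{1/2}\ll t\ll 1$.
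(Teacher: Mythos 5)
Your proposal is correct and follows essentially the same route as the paper's proof: reduce $\fdr\pp$ and $1-\pow\pp$ to probabilities under the limit law via cutoff/portmanteau arguments at the atom $\{\Theta=0\}$ and at the threshold, convert the LP hypothesis into $\p[\Upsilon>L]\le L$, split each term on $\{\Upsilon>L\}$, and apply Gaussian tail bounds at $t=L^{1/4}$, so that your constant $\phi(L)/(\alpha-\psi(L))+\psi(L)/\alpha$ coincides exactly with the paper's $C_{L,\mu_{\Theta},\tau}$. The only place you are lighter than the paper is the $\delta>1$ claim: the paper actually derives the a priori bound $\tau^2\le\sigma^2+4\lambda\beta/(\delta-1)+16/\delta$ from the Javanmard--Montanari fixed-point equation, using optimality of the proximal operator, Cauchy--Schwarz, and $\sd\le\delta/(\delta-1)$, whereas you only assert that such a bound independent of $\mu_{\Upsilon}$ exists.
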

The above theorem implies that $L\to 0$ is a sufficient condition for consistency;
this is in fact also necessary, as indicated by the following complementary lower bound.

\begin{prop}{(Lower bound).} 
\label{PROP:LBLASSO}
In the previous theorem,
assume further that $\Upsilon$ is independent of $\Theta$.
Then for any $t>0$,
\begin{align}
\liminf_{p \to \infty}
\{\fdr\pp+(1-\pow\pp)\}\ge c_{L,\sigma,\mu_{\Theta}}.
\end{align}
where $c_{L,\sigma,\mu_{\Theta}}$ is increasing in $L$, strictly positive as long as $L>0$.

\end{prop}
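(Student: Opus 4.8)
The plan is to first replace the finite-$p$ quantities by their deterministic limits, and then, for every fixed threshold $t$, to exhibit an irreducible trade-off between false positives and false negatives coming from the coordinates with large precision diagonal $\bP_{jj}$. First I would pass to the limit using the standard distributional limit (Definition~\ref{defn_sdl}): the empirical law of $(\theta_j\pp,\hat\theta^{u}_j\pp,\bP\pp_{jj})$ converges a.s.\ to the law of $(\Theta,\Theta+\tau\Upsilon^{1/2}Z,\Upsilon)$, with $Z\sim\cN(0,1)$ independent of $(\Theta,\Upsilon)$ and, under the extra hypothesis of the proposition, $\Theta\perp\Upsilon$. Writing $\alpha=\p[|\Theta|>0]\in(0,1)$ and
\[
G_0(t):=\p\big[\tau\Upsilon^{1/2}|Z|\ge t\big],\qquad G_1(t):=\p\big[|\Theta+\tau\Upsilon^{1/2}Z|\ge t\,\big|\,\Theta\ne0\big],
\]
the empirical fractions of false discoveries and of selected variables converge a.s.\ to $(1-\alpha)G_0(t)$ and $(1-\alpha)G_0(t)+\alpha G_1(t)$, so that, after checking the denominator stays a.s.\ bounded away from $0$ whenever there is nonzero power,
\[
\lim_p\fdr\pp=\frac{(1-\alpha)G_0(t)}{(1-\alpha)G_0(t)+\alpha G_1(t)}\ge(1-\alpha)G_0(t),\qquad \lim_p\pow\pp=G_1(t).
\]
The last inequality, which only uses that the denominator is $\le1$, already isolates the two mechanisms driving inconsistency: $\fdr+(1-\pow)\ge(1-\alpha)G_0(t)+(1-G_1(t))$.

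Next I would feed in two structural inputs. From $L=d_{\sf LP}(\mu_\Upsilon,\delta_0)>0$ and left-continuity of $\epsilon\mapsto\p[\Upsilon\ge\epsilon]$, the characterization \eqref{e_lp} gives $\p[\Upsilon\ge L]\ge L$; and I would invoke the universal lower bound $\tau\ge\sigma$ on the effective noise level of the debiased Lasso, so that on the event $\{\Upsilon\ge L\}$ the per-coordinate noise has standard deviation $\tau\Upsilon^{1/2}\ge\sigma\sqrt L$. Restricting both terms to $\{\Upsilon\ge L\}$ and using $\Theta\perp\Upsilon$ to factor the expectations, I obtain
\[
(1-\alpha)G_0(t)+(1-G_1(t))\ge\int_{\{v\ge L\}}\Big[(1-\alpha)\,2{\rm Q}\big(\tfrac{t}{\tau\sqrt v}\big)+\E\big[\p[|\Theta+\tau\sqrt v\,Z|<t]\,\big|\,\Theta\ne0\big]\Big]\,d\mu_\Upsilon(v),
\]
reducing matters to a one-dimensional Gaussian quantity at each fixed noise level $v\ge L$.

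The heart of the argument is a uniform lower bound on this integrand over all $v\ge L$ and all $t>0$. I fix a finite $m_1>0$ with $\p[\,|\Theta|\le m_1\mid\Theta\ne0\,]\ge\tfrac12$ (possible since $\Theta$ is a.s.\ finite); lower-bounding the miss probability by restricting to $\{|\Theta|\le m_1\}$ and to its least favourable value, and setting $u=t/(\tau\sqrt v)$ and $R=m_1/(\sigma\sqrt L)\ge m_1/(\tau\sqrt v)$, the integrand is at least
\[
g(u):=2(1-\alpha)\,{\rm Q}(u)+\tfrac12\big[{\rm Q}(R-u)-{\rm Q}(R+u)\big].
\]
Since ${\rm Q}$ is decreasing, $g>0$ pointwise and $g(u)\to\tfrac12$ as $u\to\infty$, whence $c':=\inf_{u\ge0}g(u)>0$; integrating gives $\fdr+(1-\pow)\ge c'\,\mu_\Upsilon([L,\infty))\ge c'L=:c_{L,\sigma,\mu_\Theta}$. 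Monotonicity is then transparent: increasing $L$ both raises the prefactor and lowers $R=m_1/(\sigma\sqrt L)$, which raises $c'$ because $g$ is decreasing in $R$; and $c_{L,\sigma,\mu_\Theta}>0$ precisely when $L>0$.

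The main obstacle is this trade-off step, and in particular resisting the temptation to bound $G_1-G_0$ by the total variation between $\cN(\theta,\tau^2 v)$ and $\cN(0,\tau^2 v)$: that comparison is hopeless when signals are strong, since the total variation is then close to $1$ and the bound is vacuous — yet strong signals are exactly the regime where consistency can nearly be achieved. The correct move is to exploit Gaussian anti-concentration while keeping the \emph{same} global threshold $t$ in the false-positive term $2{\rm Q}(t/(\tau\sqrt v))$ and the miss term $\p[|\theta+\tau\sqrt v\,Z|<t]$: it is this shared $t$ that couples the two errors and produces a bound that degrades gracefully (but stays strictly positive) as the effective signal-to-noise ratio $R=m_1/(\sigma\sqrt L)$ grows. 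A secondary but genuine technical point is the very first step — justifying the interchange of limit and expectation in the ratio defining $\fdr\pp$, which requires the empirical number of selections to stay bounded below and $t$ to avoid the at most countably many atoms of the limiting law.
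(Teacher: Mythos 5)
Your argument is correct, and its outer skeleton matches the paper's: both pass to the limiting Gaussian-mixture quantities from the standard distributional limit, lower bound $\fdr\pp$ by the false-positive fraction (since the number of selections, $\vee 1$, is at most $p$) and $1-\pow\pp$ by the miss fraction, and both eliminate the unknown noise level through the same two structural facts, $\tau\ge\sigma$ and $\mathbb{P}[\Upsilon\ge L]\ge L$. Where you genuinely depart from the paper is the key trade-off step. The paper runs a self-referential bootstrap: calling $I$ the maximum of the two limiting error probabilities, it first deduces $\mathbb{P}[\tau\Upsilon^{1/2}|Z|\ge t]\le I/(1-\alpha)$, feeds this back into the miss bound $\mathbb{P}[|\Theta|>0,\,|\tau\Upsilon^{1/2}Z+\Theta|<t]\ge\mathbb{P}[|\Theta|\in(0,t/2)]\cdot\tfrac12\,\mathbb{P}[\tau\Upsilon^{1/2}|Z|<t]$ (a density-monotonicity argument), and concludes
\begin{equation}
I\ \ge\ \frac{1-\alpha}{3-\alpha}\,\max\Big\{2L\,{\rm Q}\big(\tfrac{t}{\sigma\sqrt{L}}\big),\ \mathbb{P}\big[|\Theta|\in(0,t/2)\big]\Big\},
\end{equation}
so the miss mechanism it exploits is the mass of \emph{small} signals $|\Theta|\in(0,t/2)$. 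You instead fix the threshold, restrict to $\{\Upsilon\ge L\}$, and bound the per-coordinate integrand pointwise via Gaussian anti-concentration around the worst-case signal below the conditional median $m_1$, obtaining the product form $c'L$ with $c'=\inf_{u\ge0}\{2(1-\alpha){\rm Q}(u)+\tfrac12[{\rm Q}(R-u)-{\rm Q}(R+u)]\}$ and $R=m_1/(\sigma\sqrt{L})$; your monotonicity checks (${\rm Q}(r-u)-{\rm Q}(r+u)$ decreasing in $r\ge0$, hence $g$ decreasing in $R$) are correct, and both proofs treat the limit-interchange and atom issues at the same level of rigor. What each approach buys: your constant is fully explicit (Gaussian tails plus a single quantile of $|\Theta|$) and its monotonicity in $L$ is transparent, whereas the paper's constant adapts to the entire small-signal mass function of $\mu_\Theta$ and can be substantially sharper --- for instance, if slightly more than half of the nonzero signals are enormous while the rest are tiny, your $m_1$ is enormous and $c'$ is exponentially small in $m_1^2/(\sigma^2L)$, while the paper's $\mathbb{P}[|\Theta|\in(0,t/2)]$ term still sees the tiny signals and keeps a bound of order $\min\{L,\alpha\}$. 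This is a quantitative, not a logical, difference; the proposition as stated (positivity iff $L>0$, monotonicity in $L$) follows from your argument as well.
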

Combining the above two results, we get the following interpretation. Suppose that the distribution of $\Theta$ and the values of $\sigma$ are  fixed,
and suppose that the parameters $\lambda$ and $t$ in the algorithm optimally tuned (i.e.\ minimizing $\limsup_{p\to\infty}\{\fdr\pp+(1-\pow\pp)\}$ for any given distributions). 
If $\delta>1$,
then, remarkably, the variable selection procedure is consistent if and only if $L$ being small -- as long as $\Upsilon$ is independent of $\Theta$, while other characteristics of the law of $\Upsilon$ are not necessary to know.
In other words, we proved that $\esd=L:=\lim_{p\to\infty}\|({\bf P}\pp_{jj})_j\|_{\sf LP}$.
If $\delta\le 1$, 
small $L$ may not be sufficient for consistency since $C_{L,\mu_{\Theta},\sigma,\lambda,\delta}$ also depends on $\mu_{\Upsilon}$ through $\tau$.

\section{Results for general knockoff mechanisms}\label{sec_general}


Given $\bsigma$, let $\underline{\bsigma}$ be the extended $2p\times 2p$ covariance matrix for the true predictors and their knockoffs. 
Let $\underline{\theta}=[\theta, \mathbf{0}]\in \R^{2p}$.
Consider the procedure of the knockoff filter described in 
Section~\ref{sec_pre},
with a slight tweak:
define $\Delta_j:=|\hat{\underline{\theta}}_j^u|-|\hat{\underline{\theta}}_{j+p}^u|$,
where 
$$
\hat \utheta^u=\hat{\utheta}+\frac{\sf d}{n}\underline{\bsigma}^{-1}{[\bf X, \tilde \bX]}^{\top}({\bf Y-[X,\tilde X]}\hat{\utheta})
$$
and $\hat \utheta$ is defined in~\eqref{EQ:deflasso}. This modification still fulfills the sufficiency and antisymmetry condition in
\cite[Section~2.2]{barber2015controlling},
so its FDR can still be controlled.
This change allows us to perform analysis using results in \cite{javanmard2014hypothesis}.
We also assume that the Lasso parameter $\lambda$ is an arbitrary number independent of $p$.

\begin{thm}\label{THM_KF}
Let $\{(\underline{\bsigma}\pp,\utheta\pp)\}_{p\ge 1}$ admit a standard distributional limit
for a given $\lambda\ge 0$,
and denote the distributional limit by $(\underline{\Theta},\underline{\Upsilon}^{1/2}Z, \underline{\Upsilon})$, where $Z\sim \mathcal{N}(0,1)$, and $\underline{\Theta}$ and $\underline{\Upsilon}$ are some random variables independent of $Z$.  
Assume further that   $L:=\lim_{p\to\infty}\|(\uP\pp_{jj})_j\|_{\sf LP}$ where the limit exists almost surely under the standard distributional limit assumption.
Then the knockoff filter with FDR budget $q \in (0,1)$ satisfies:
$$	
\liminf_{p\to\infty}\pow\pp\ge 1-C_{L,q,\tau,\mu_{\underline{\Theta}}},
$$
where $\lim_{L\to 0}C_{L,q,\tau,\mu_{\underline{\Theta}}}=0$ for any given $q$, $\tau$, $\mu_{\underline{\Theta}}$.
Further if $\delta>2$, 
then $C_{L,q,\tau,\mu_{\underline{\Theta}}}$ in the above inequality can be replaced by $C_{L,q,\lambda,\sigma,\delta,\mu_{\underline{\Theta}}}$.
\end{thm}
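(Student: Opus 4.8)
The plan is to exploit the automatic FDR control of the knockoff filter (Theorem~1 of \cite{barber2015controlling}, which survives the debiased modification since the sufficiency and antisymmetry conditions are preserved) so that \emph{only the power needs a lower bound}. The engine is the standard distributional limit for the extended instance $(\underline{\theta}\pp,\underline{\bsigma}\pp)$: the empirical law of the triples $\big(\underline{\theta}_j,(\hat{\underline{\theta}}^u_j-\underline{\theta}_j)/\tau,\uP_{jj}\big)$ over $j\in[2p]$ converges almost surely to the law of $(\underline{\Theta},\underline{\Upsilon}^{1/2}Z,\underline{\Upsilon})$. Since the first coordinate of the triple is exactly $\underline{\theta}_j$, I can read off the limiting empirical behaviour \emph{separately} on the signal coordinates $\{j\in[p]:\theta_j\neq 0\}$ and on the null/knockoff coordinates; in particular each knockoff estimate $\hat{\underline{\theta}}^u_{j+p}$ behaves like the centered noise $\tau\underline{\Upsilon}^{1/2}Z$ whose scale is governed by $\uP_{(j+p)(j+p)}$, so atypically large values occur with frequency controlled by $L=\lim_p\|(\uP_{jj})_j\|_{\sf LP}$.

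First I would fix a deterministic cutoff $t_0\to 0$ as $L\to 0$ --- the analogue of the choice $t=L^{1/4}$ in Theorem~\ref{PROP_LASSO} --- and show $T\le t_0$ with probability tending to one. Because $T$ is the \emph{smallest} $t\in\cD$ for which $\#\{j:\Delta_j\le -t\}/(\#\{j:\Delta_j\ge t\}\vee 1)\le q$, it suffices to produce $t_0$ as a feasible point of this constraint (adjusting $t_0$ to the nearest element of $\cD$ is routine). The crucial device, which circumvents the fact that the distributional limit controls marginals but \emph{not} the joint law of the pair $(\hat{\underline{\theta}}^u_j,\hat{\underline{\theta}}^u_{j+p})$, is to decouple $\Delta_j=|\hat{\underline{\theta}}^u_j|-|\hat{\underline{\theta}}^u_{j+p}|$ by union bounds. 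The numerator is bounded by $\#\{j:|\hat{\underline{\theta}}^u_{j+p}|\ge t_0\}$, a pure-noise count that is $O(L)$ once $t_0$ exceeds the bulk noise scale; the denominator is bounded below by $\#\{j:\theta_j\neq 0,\ |\hat{\underline{\theta}}^u_j|\ge 2t_0\}-\#\{j:|\hat{\underline{\theta}}^u_{j+p}|\ge t_0\}$, which tends to a positive fraction $\asymp\mathbb{P}[|\Theta|>0]$. Hence the ratio tends to $0<q$, giving feasibility and $T\le t_0$.

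With $T\le t_0$ secured, power is lower bounded by the fraction of signals clearing the \emph{fixed} level $t_0$, namely $\pow\pp\ge \#\{j:\theta_j\neq 0,\ \Delta_j\ge t_0\}/\#\{j:\theta_j\neq 0\}$. I would decouple again: a signal is missed at level $t_0$ only if $|\hat{\underline{\theta}}^u_j|<s$ or $|\hat{\underline{\theta}}^u_{j+p}|>s-t_0$ for an intermediate level $s$. By the distributional limit restricted to signals the first event has limiting frequency $\mathbb{P}[\,|\Theta+\tau\Upsilon^{1/2}Z|<s\mid\Theta\neq 0\,]$, small for small $s$ and small noise, while the second is once more an atypical-noise event of frequency $O(L)$. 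Optimizing the split $s$ against $t_0$ bounds the missed fraction by a quantity vanishing as $L\to 0$, yielding $\liminf_p\pow\pp\ge 1-C_{L,q,\tau,\mu_{\underline{\Theta}}}$. For the refinement under $\delta>2$ I would note that the extended design $[\bX,\tilde\bX]$ has effective aspect ratio $n/(2p)\to\delta/2>1$, so exactly as in the $\delta>1$ discussion for Theorem~\ref{PROP_LASSO} the noise level $\tau$ can be pinned down from $\sigma,\lambda,\delta$ and $\mu_{\underline{\Theta}}$ alone, independently of $\mu_{\underline{\Upsilon}}$, and absorbed into the constant.

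The main obstacle I anticipate is the control of the adaptive threshold $T$, where two difficulties compound: the distributional limit speaks only of empirical measures of single coordinates and is silent on the coupling within a (variable, knockoff) pair, and $T$ is a minimum over the random discrete set $\cD$ of a ratio of counts. The union-bound decoupling resolves the first at some loss of tightness, but one must still argue that the empirical counts at level $t_0$ converge to their limiting fractions uniformly enough --- handling possible atoms of $\Theta$ at the endpoints through a portmanteau/continuity argument --- to conclude feasibility with high probability rather than merely in expectation.
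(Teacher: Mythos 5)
Your proposal follows essentially the same route as the paper's proof: it bounds only the power (FDR being automatic), fixes a deterministic level $t'=L^{1/4}$ and shows it is feasible for the threshold constraint so that $T\le t'$, decouples $\Delta_j$ via union bounds into separate tail counts for $|\hat{\underline{\theta}}^u_j|$ and $|\hat{\underline{\theta}}^u_{j+p}|$ precisely because the distributional limit controls only marginal empirical laws, and obtains the $\delta>2$ refinement from the $\tau$ bound applied to the $2p$-dimensional extended design. The structure, the key decoupling device, and the choice of cutoff all match the paper's argument, so the proposal is correct and not materially different.
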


%
%
%

Taking $q \to 0$ in the above theorem implies that $L \to 0$ is sufficient for consistency; 
the following result shows the necessity in a representative setting:
\begin{prop}\label{PROP6}
In the previous theorem, further assume that $\theta_j=\1\{j\in\mathcal{H}_1\}$ where $|\mathcal{H}_1|=\alpha p$ ($\alpha>0$) is selected uniformly at random.
Then, under a suitable distributional limit assumption, 
the knockoff filter with FDR budget $q \in (0,\alpha L{\rm Q}^2(\frac{1}{\sigma\sqrt{L}}))$ satisfies:
$$
\limsup_{p\to\infty}\pow\pp\le  3/4.
$$
\end{prop}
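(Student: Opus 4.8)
The plan is to argue by contradiction. Suppose $\limsup_{p\to\infty}\pow\pp>3/4$ and pass to a subsequence along which $\pow\pp\to P>3/4$ and the data-driven threshold $T$ converges to a deterministic limit $t^\ast$. Under the distributional-limit assumption, and using that $\mathcal{H}_1$ is uniformly random so that the signal pattern is asymptotically independent of the precision profile $(\uP\pp_{jj})_j$, I would first show that the relevant empirical tail counts concentrate: writing $\Delta_j=|\hat{\underline\theta}^u_j|-|\hat{\underline\theta}^u_{j+p}|$, the quantities $\frac1p\#\{j\in[p]:\Delta_j\ge t\}$, $\frac1p\#\{j\in[p]:\Delta_j\le -t\}$, and the conditional survival $\frac1{\alpha p}\#\{j\in\mathcal{H}_1:\Delta_j\ge t\}$ all admit a.s. deterministic, non-increasing limits $G(t)$, $H(t)$ and $S(t)$, computed from the joint limiting law of the pair $(\hat{\underline\theta}^u_j,\hat{\underline\theta}^u_{j+p})$. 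In these terms $\pow\pp\to S(t^\ast)$, so $S(t^\ast)=P>3/4$.

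The second step is to transfer the FDR constraint to a fixed, signal-adapted threshold. Let $t_{3/4}$ be defined by $S(t_{3/4})=3/4$; since $S$ is non-increasing, $S(t^\ast)=P>3/4$ forces $t^\ast\le t_{3/4}$. The knockoff filter keeps its FDP estimate at the selected threshold below $q$, which in the limit reads $H(t^\ast)\le q\,G(t^\ast)\le q$. Because $H$ is non-increasing and $t^\ast\le t_{3/4}$, this gives $H(t_{3/4})\le H(t^\ast)\le q$. Since nulls only add to the left-tail count, $H(t)\ge \alpha\,\bar S(t)$, where $\bar S(t):=\p[\Delta_{\mathrm{sig}}\le -t]$ is the limiting left tail over signals, so the whole argument reduces to lower bounding $\bar S(t_{3/4})$.

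The crux is the estimate $\bar S(t_{3/4})\ge L\,{\rm Q}^2(\tfrac1{\sigma\sqrt L})$. By the L\'evy--Prokhorov characterization \eqref{e_lp}, a fraction $L$ of coordinates satisfy $\uP\pp_{jj}\ge L$, and by the uniform choice of $\mathcal{H}_1$ the same fraction of signals has large precision. For such a signal $\hat{\underline\theta}^u_j=1+N_1$ and $\hat{\underline\theta}^u_{j+p}=N_2$ with Gaussian noises of standard deviation $\tau\,(\uP_{jj})^{1/2}\ge\tau\sqrt L\ge\sigma\sqrt L$ (using $\tau\ge\sigma$ at the AMP fixed point). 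On the event $\{1+N_1\in[0,1]\}\cap\{N_2\ge 1+t_{3/4}\}$ one has $\Delta_j=|1+N_1|-|N_2|\le 1-(1+t_{3/4})=-t_{3/4}$; each factor is a Gaussian interval/tail event at scale $\sigma\sqrt L$, and for $t_{3/4}$ in the admissible range their joint probability is at least ${\rm Q}^2(\tfrac1{\sigma\sqrt L})$. Multiplying by the fraction $L$ of large-precision signals yields the bound. Assembling the three steps gives $\alpha L\,{\rm Q}^2(\tfrac1{\sigma\sqrt L})\le H(t_{3/4})\le q$, contradicting the hypothesis $q<\alpha L\,{\rm Q}^2(\tfrac1{\sigma\sqrt L})$, so $\limsup_p\pow\pp\le 3/4$.

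The hard part will be Step 3, and within it three issues. First, the two debiased estimates $\hat{\underline\theta}^u_j$ and $\hat{\underline\theta}^u_{j+p}$ of a variable and its knockoff are \emph{correlated} (their joint noise covariance is governed by the off-diagonal block $\uP_{j,j+p}$), so the factorization of the two Gaussian events is not literal and must be justified from the joint limiting law supplied by the suitable distributional-limit assumption. Second, one must make the data-driven threshold $T$ and all empirical tail processes converge so that the limiting inequalities $H(t^\ast)\le q\,G(t^\ast)$ and $S(t^\ast)=P$ are rigorous. Third, one must verify that $t_{3/4}$ actually lies in the window where the product of the two events is at least ${\rm Q}^2(\tfrac1{\sigma\sqrt L})$ — this is exactly what pins the constant $3/4$, since a more aggressive power target would push $t_{3/4}$ out of the admissible range.
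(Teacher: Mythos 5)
Your high-level reduction (force the data-driven threshold to be large by exhibiting a macroscopic fraction of \emph{signals} with very negative $\Delta_j$, then convert that into a power bound) matches the paper's strategy, and you correctly flag the correlation between $\hat{\underline{\theta}}_j^u$ and $\hat{\underline{\theta}}_{j+p}^u$ as the crux. But the step you defer (your Step~3) is not a missing technicality: as designed it is false, for two separate reasons. First, the proposition constrains only the \emph{diagonal} entries of $\uP$ (through $L$), so the joint noise law can be arbitrarily close to degenerate; e.g.\ when $s_j\to 0$ the noise pair satisfies $N_2\approx -N_1$, and then your event $\{N_1\in[-1,0]\}\cap\{N_2\ge 1+t_{3/4}\}$ is empty, so its probability is $0$, not $\ge {\rm Q}^2(\frac{1}{\sigma\sqrt{L}})$. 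Second, even granting independence, your interval event is monotone in the \emph{wrong} direction in the noise scale: $\mathbb{P}[N_1\in[-1,0]]=\tfrac12-{\rm Q}(1/s)\to 0$ as the standard deviation $s$ grows, and $\uP_{jj}\ge L$ gives only a \emph{lower} bound on $s$, so this factor cannot be bounded below by any fixed constant. The paper's proof is engineered to evade both failure modes: it works with the event $\{U<-1,\ |V|\ge -U\}$ (both constituents are tail events, hence favorably monotone in the noise scale), splits on the sign of the off-diagonal $\Upsilon_{12}$, uses the reflection $V\mapsto -V$ to reduce to the joint lower tail $\{U<-1,V<-1\}$ with nonnegative correlation, and only then invokes Slepian's lemma to get $\mathbb{P}[U<-1,V<-1]\ge {\rm Q}^2\bigl(\frac{1}{\tau\sqrt{\Upsilon_{11}}}\bigr)$. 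Any correct completion of your Step~3 needs an event with these robustness properties; yours has neither.

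There is a second genuine gap, which makes your Step~2 circular: you never establish the "admissible window" for $t_{3/4}$, and that is exactly where the constant $3/4$ comes from. Your chain $\alpha L\,{\rm Q}^2(\frac{1}{\sigma\sqrt{L}})\le \alpha\bar S(t_{3/4})\le H(t_{3/4})\le q$ requires $t_{3/4}$ to be bounded by a fixed threshold (effectively $t_{3/4}\le 1$, the signal magnitude), since $\bar S(t)$ decays in $t$; but $t_{3/4}\le 1$ is equivalent to $S(1)\le 3/4$, i.e.\ to the bound
\begin{align}
\mathbb{P}\bigl[\,|1+U|-|V|\ge 1\,\bigr]\;\le\; 1-\mathbb{P}\bigl[\,U\ge 0,\ |V|\ge U\,\bigr]\;\le\; \tfrac34,
\end{align}
which the paper proves from the exchangeability and sign symmetry of $(U,V)$ (giving $\mathbb{P}[U\ge0,|V|\ge U]\ge\tfrac14$). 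Without this computation the constant $3/4$ is unexplained and your contradiction cannot close; once you do supply it, the contradiction scaffolding becomes unnecessary and your argument collapses to the paper's direct one: show $\frac1p\#\{j:\Delta_j\le -1\}\ge \alpha L\,{\rm Q}^2(\frac{1}{\tau\sqrt{L}})$ asymptotically almost surely, conclude $T\ge 1$ whenever $q$ is below that value, and then apply the display above. So the proposal is missing both key ideas of the proof—the symmetrization-plus-Slepian device and the exchangeability bound—rather than needing only technical polish.
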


The ``suitable distributional limit assumption'' in Proposition~\ref{PROP6}
postulates a Gaussian limit for the empirical distribution of the pair $(\underline{\hat{\theta}}_j^u-\underline{\theta}_{j},\underline{\hat{\theta}}_{j+p}^u-\underline{\theta}_{j+p})_{j=1}^p$,
which is stronger than the marginal Gaussian limit assumption in Definition~\ref{defn_sdl}, but nevertheless supported by the  replica heuristics.
Moreover, this condition can be rigorously shown for the case of $\delta>2$, $\lambda=0$ (least squares) and block diagonal $\underline{\bsigma}$.
The assumption that $\theta_j=1$ under $\mathcal{H}_1$ in
Proposition~\ref{PROP6} facilitates the proof but we expect that a similar inconsistency result holds for general $\mu_{\Theta}$.
The assumption  that $\mathcal{H}_1$ is selected uniformly at random is a counterpart of the independence of $\Theta$ and $\Upsilon$ in
Proposition~\ref{PROP:LBLASSO}.

Together, Theorem~\ref{THM_KF} and Proposition~\ref{PROP6}
show that for the knockoff filer, 
$\esd=\lim_{p\to\infty}\|({\bf \underline{P}}\pp_{jj})_j\|_{\sf LP}$ in the regime of $\delta>1$.
This suggests that one should construct the knockoff variables so that the empirical distribution of  $(\underline{\bP}_{jj})_{j=1}^{2p}$ converges to $0$ weakly.


\section{Conditional independence knockoff and ESD}
\label{SEC_COND}

We introduce the \emph{conditional independence knockoff},
where $X_j$ and $\tilde{X}_j$ are independent conditionally on $X_{\neg j}:=\{X_k, k \in [p]\setminus \{j\}\}$, 
for each $j=1,\dots,p$.
This condition implies that
$$
\E[X_j \tilde X_j]=\E\big[\E[X_j \tilde X_j|X_{\neg j}]\big]=\E\big[(\E[X_j|X_{\neg j}])^2\big]
$$
Therefore recalling that $s_1,\dots,s_p$ are as defined in \eqref{e_usigma}, we get
\begin{align}
s_j &= \bsigma_{jj}-\E[X_j \tilde X_j]\nonumber\\
&=\E\big[\E[X_j^2|X_{\neg j}]\big]-\E\big[(\E[X_j|X_{\neg j}])^2\big]\nonumber\\
&=\E[\var(X_j|X_{\neg j})]=P_{jj}^{-1}.\label{e9}
\end{align}
However such an $\bf s$ may violate the positive semidefinite assumption for the joint covariance matrix (examples exist already in the case $p=3$).
Yet, interestingly, we find that in the case of tree graphical models, this construction always exists. 
In many practical scenarios, the predictors $X^p$ comes from a tree graphical model, and we can estimate the underlying graph sing the Chow-Liu algorithm \cite{chow1968approximating}.


\begin{thm}\label{thm_tree}
The covariance matrix $\underline{\bsigma}$ defined in \eqref{e_usigma} is positive semidefinite with $\bf s$ defined  in \eqref{e9}, if either 1) $\bsigma$ is the covariance matrix of a tree graphical model; 
or 2) $\bf P$ is diagonally dominant.
\end{thm}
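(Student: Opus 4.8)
The plan is to reduce both hypotheses to a single eigenvalue bound and then treat each case separately. By the criterion \eqref{e7}, the matrix $\underline{\bsigma}$ of \eqref{e_usigma} is positive semidefinite if and only if $0 \preceq \diag(\bs) \preceq 2\bsigma$. Writing $\bD := \diag(\bs)$, the lower bound is immediate from \eqref{e9}, since $s_j = \bP_{jj}^{-1} > 0$ (the diagonal of the positive definite precision matrix $\bP = \bsigma^{-1}$ is positive). The theorem therefore reduces to the upper bound $\bD \preceq 2\bsigma$. Conjugating by $\bsigma^{-1/2}$ rewrites this as $\lambda_{\max}(\bsigma^{-1/2}\bD\bsigma^{-1/2}) \le 2$, and this matrix shares its (real) spectrum with both $\bD\bP$ and the symmetric positive definite matrix $\tilde{\bP} := \bD^{1/2}\bP\bD^{1/2}$. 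Hence everything reduces to showing
$$
\lambda_{\max}(\tilde{\bP}) \le 2, \qquad \tilde{\bP} = \bD^{1/2}\bP\bD^{1/2}.
$$
The structural features I will exploit are that $\tilde{\bP}$ has unit diagonal (since $\tilde{\bP}_{ii} = \bP_{ii}^{-1}\bP_{ii}=1$) and that its off-diagonal support coincides with that of $\bP$.

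For the diagonally dominant case I would apply Gershgorin's theorem to $\bD\bP$, whose diagonal entries all equal $1$ and whose $i$-th off-diagonal row sum is $\bP_{ii}^{-1}\sum_{j\neq i}|\bP_{ij}| \le 1$ by diagonal dominance. Every Gershgorin disk is thus contained in $\{z : |z-1|\le 1\}$; since $\bD\bP$ has the same (real) spectrum as the symmetric $\tilde{\bP}$, all its eigenvalues lie in $[0,2]$, giving $\lambda_{\max}(\tilde{\bP})\le 2$.

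For the tree case I would exploit that a tree is bipartite. Fix a proper two-coloring of the vertices and encode it by signs $\eps_i \in \{\pm 1\}$ with $\eps_i\eps_j = -1$ for every edge $(i,j)$ of the tree, i.e.\ for every off-diagonal pair with $\bP_{ij}\neq 0$. Given any $x \in \R^p$, set $y_i := \eps_i x_i$. Because the only nonzero off-diagonal entries of $\tilde{\bP}$ sit on edges, each such term picks up a factor $\eps_i\eps_j = -1$, and a short computation yields the identity
$$
y^\top \tilde{\bP} y = \sum_i x_i^2 - 2\sum_{(i,j)\in E}\tilde{\bP}_{ij}x_ix_j = x^\top(2\bI - \tilde{\bP})x.
$$
Since $\tilde{\bP}\succeq 0$ the left-hand side is nonnegative, and as $x\mapsto y$ is a bijection of $\R^p$ this forces $2\bI - \tilde{\bP}\succeq 0$, i.e.\ $\lambda_{\max}(\tilde{\bP})\le 2$.

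I expect the main obstacle to be discovering the bipartite sign-flip trick for the tree case: the reduction to the eigenvalue bound is routine linear algebra, and the Gershgorin argument under diagonal dominance is standard, whereas the tree case genuinely requires acyclicity (equivalently, bipartiteness). Without a proper two-coloring the displayed identity fails and $\lambda_{\max}(\tilde{\bP})$ can exceed $2$, which matches the remark in the excerpt that the construction can already break for $p=3$. A minor point to verify along the way is that $\tilde{\bP}_{ij}\neq 0$ only on tree edges, which holds because in a Gaussian tree graphical model $\bP$ is supported on the diagonal together with the edges of the tree.
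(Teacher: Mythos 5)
Your proposal is correct, and while it shares the paper's overall reduction, your proof of the tree case is genuinely different. Both arguments first reduce, via \eqref{e7}, to the statement that the precision matrix stays positive semidefinite after flipping the signs of its off-diagonal entries: the paper writes this as $2\diag(\diag(\bP))-\bP\succeq 0$, you as $\lambda_{\max}(\bD^{1/2}\bP\bD^{1/2})\le 2$ with $\bD=\diag(\bs)$, and these are the same condition by congruence/similarity. For the diagonally dominant case the two treatments are interchangeable: the paper observes that sign-flipping preserves diagonal dominance (hence positive semidefiniteness), while you apply Gershgorin to $\bD\bP$ --- the same standard fact in different clothing. The real divergence is the tree case. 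The paper proves Lemma~\ref{lem_la} (a forest-supported matrix and its sign-flipped version have equal spectra) by expanding the characteristic polynomial along a leaf and inducting, so that off-diagonal entries enter only through their squares. You instead use that a tree is bipartite: if $\bS=\diag(\eps)$ is the signature matrix of a proper two-coloring, then, because the off-diagonal support of $\tilde\bP=\bD^{1/2}\bP\bD^{1/2}$ lies on tree edges and $\tilde\bP$ has unit diagonal,
$$
\bS\tilde\bP\bS=2\bI-\tilde\bP,
$$
and your quadratic-form identity is exactly this congruence written out; since $\bS$ is orthogonal and $\tilde\bP\succeq 0$, the right-hand side is positive semidefinite (indeed similar to $\tilde\bP$, recovering the paper's equal-spectra conclusion). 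Your route buys two things: it avoids the determinant and induction bookkeeping entirely, and it is strictly more general, applying whenever the support graph of $\bP$ is bipartite (e.g.\ even cycles), whereas leaf expansion is tied to acyclicity; in particular your argument implies the paper's Lemma~\ref{lem_la}, since forests are bipartite. Both viewpoints also explain the paper's remark that the construction can already fail at $p=3$: the triangle is the smallest non-bipartite graph.
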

Either condition in the theorem intuitive imposes that the graph is sparse.
In practice, $\bsigma$ needs to be estimated,
which is generally only feasible with some sparse structure (e.g.\ via graphical lasso).

Assuming the existence of a standard distributional limit and $\delta>1$,
we have the following results:
\begin{thm}\label{thm_cond}
For tree graphical models,
$\esd=\lim_{p \to \infty}  \|(\uP\pp_{jj}\bsigma_{jj})_j\|_{\sf LP}$  for {\sc CI-knockoff}.
\end{thm}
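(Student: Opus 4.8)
The plan is to reduce the theorem to the already-established general-knockoff identity $\esd=\lim_{p\to\infty}\|(\uP\pp_{jj})_j\|_{\sf LP}$ (Theorem~\ref{THM_KF} together with Proposition~\ref{PROP6}, valid for $\delta>1$ under the assumed standard distributional limit) by proving the \emph{pointwise} identity $\uP\pp_{jj}=(\bP\pp_{jj})^2\,\bsigma\pp_{jj}$ for the CI-knockoff on a tree. Once this holds, the families $(\uP\pp_{jj})_j$ and $((\bP\pp_{jj})^2\bsigma\pp_{jj})_j$ have identical empirical distributions, so their L\'evy--Prokhorov norms agree for every $p$, and the claimed ESD follows (which, under the knockoff normalization $\diag(\bsigma)=\bone$, is exactly the stated quantity). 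Recall the CI-knockoff fixes $\bs$ by $s_j=1/\bP_{jj}$ (see \eqref{e9}) and that $\underline{\bsigma}$ is genuinely PSD on a tree by Theorem~\ref{thm_tree}, so the construction exists.

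First I would diagonalize the block matrix \eqref{e_usigma}. Conjugating $\underline{\bsigma}$ by the orthogonal involution $\tfrac1{\sqrt2}\begin{bmatrix}\bI&\bI\\\bI&-\bI\end{bmatrix}$ produces the block-diagonal matrix with blocks $2\bsigma-\diag(\bs)$ and $\diag(\bs)$ (equivalently, $\tfrac1{\sqrt2}(X+\tilde X)$ and $\tfrac1{\sqrt2}(X-\tilde X)$ are independent Gaussians with these covariances). Inverting block by block gives $\uP_{jj}=\tfrac12\big(M_{jj}+\bP_{jj}\big)$ with $M:=(2\bsigma-\diag(\bs))^{-1}$, so everything reduces to computing $\diag(M)$.

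Next I would normalize the precision matrix. With $\Lambda:=\diag(\bP)$, the CI-choice $\diag(\bs)=\Lambda^{-1}$ gives $2\bsigma-\diag(\bs)=\Lambda^{-1/2}(2\hat{\bsigma}-\bI)\Lambda^{-1/2}$, where $\hat{\bsigma}:=\Lambda^{1/2}\bsigma\Lambda^{1/2}=\hat{\bP}^{-1}$ and $\hat{\bP}:=\Lambda^{-1/2}\bP\Lambda^{-1/2}$ is the tree precision rescaled to unit diagonal. Hence $M_{jj}=\bP_{jj}\,\big((2\hat{\bsigma}-\bI)^{-1}\big)_{jj}$ and, since $\hat{\bsigma}_{jj}=\bP_{jj}\bsigma_{jj}$, the target $\uP_{jj}=(\bP_{jj})^2\bsigma_{jj}$ is equivalent to $\big((2\hat{\bsigma}-\bI)^{-1}\big)_{jj}=2\hat{\bsigma}_{jj}-1$. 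Writing $O:=\hat{\bP}-\bI$ for the off-diagonal (weighted adjacency) part, the Cayley-type identity $(2\hat{\bsigma}-\bI)^{-1}=(2\bI-\hat{\bP})^{-1}\hat{\bP}=(\bI-O)^{-1}(\bI+O)=2(\bI-O)^{-1}-\bI$ collapses the claim to the single equality $\big((\bI-O)^{-1}\big)_{jj}=\big((\bI+O)^{-1}\big)_{jj}=\hat{\bsigma}_{jj}$.

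The crux, and the only place the tree hypothesis is used, is this last equality. Expanding in Neumann series, the two diagonals differ by $2\sum_{k\ \mathrm{odd}}[O^k]_{jj}$, and $[O^k]_{jj}$ is a weighted count of closed walks of length $k$ based at $j$ in the tree; because a tree is bipartite it admits no odd closed walks, so every odd term vanishes. The hard part is really recognizing bipartiteness as the operative structural fact — it is genuinely necessary, since for a non-tree such as the equicorrelated triangle one already computes $\uP_{jj}\neq(\bP_{jj})^2\bsigma_{jj}$. A secondary technical point is justifying the Neumann expansions, i.e.\ controlling $\lambda_{\max}(O)=\lambda_{\max}(\hat{\bP})-1<1$; this follows from $2\bsigma-\diag(\bs)\succ0$, which is precisely the first diagonal block produced in step one and is guaranteed (up to a routine boundary/regularization argument) by $\underline{\bsigma}\succeq0$ from Theorem~\ref{thm_tree}. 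Substituting $\uP_{jj}=(\bP_{jj})^2\bsigma_{jj}$ back into the general-knockoff ESD formula then finishes the proof.
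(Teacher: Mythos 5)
Your proposal is correct, and while it shares the paper's overall skeleton --- prove the pointwise identity $\uP_{jj}=\bP_{jj}^2\bsigma_{jj}$ for $j\in[p]$ (the knockoff coordinates being handled by swap symmetry of $\underline{\bsigma}$) and then plug into the general-knockoff characterization $\esd=\lim_{p\to\infty}\|(\uP\pp_{jj})_j\|_{\sf LP}$ --- the linear algebra by which you reach that identity is genuinely different. The paper takes a Schur-complement route: $(\uP_{[p]\times[p]})^{-1}=2\diag(\bs)-\diag(\bs)\bP\diag(\bs)$, which differs from $\diag(\bs)\bP\diag(\bs)$ only in the signs of the off-diagonal entries; it then invokes Lemma~\ref{lem_la} (characteristic polynomials of forest-patterned matrices are invariant under flipping off-diagonal signs, proved by leaf induction on the determinant expansion), applied to this matrix and to every vertex-deleted principal submatrix, so that the cofactor formula forces the diagonals of the two inverses to agree. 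You instead block-diagonalize $\underline{\bsigma}$ via the independence of $(X+\tilde X)/\sqrt{2}$ and $(X-\tilde X)/\sqrt{2}$, reduce by a Cayley-type manipulation to $\big((\bI-O)^{-1}\big)_{jj}=\big((\bI+O)^{-1}\big)_{jj}$, and kill the odd Neumann terms by bipartiteness. Both arguments hinge on the same structural fact (off-diagonal sign flips are harmless on a tree), but yours isolates bipartiteness as the operative property, so it extends verbatim to any bipartite graphical model for which the CI-knockoff exists, whereas leaf induction is forest-specific; conversely, the paper's determinant identity is a finite algebraic statement with no convergence issues, while your Neumann expansion needs the spectral-radius/regularization caveat you flag. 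That caveat can be removed outright: on a bipartite pattern one has $-O=SOS$ for the $\pm 1$ bipartition sign matrix $S$, whence $(\bI-O)^{-1}=S(\bI+O)^{-1}S$, which gives your key diagonal equality with no series at all and, as a byproduct, reproves Lemma~\ref{lem_la}.
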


\begin{thm}\label{thm_equi}
$\esd=\lambda_{\max}(\bsigma)$
for {\sc equi-knockoff} if $s_j=a\lambda_{\min}(\bsigma)$, $a\in(0,2)$, $j=1,\dots,p$.
\end{thm}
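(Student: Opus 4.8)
The plan is to specialize the general-knockoff characterization of Theorem~\ref{THM_KF} to the equi-knockoff construction and evaluate the resulting L\'evy--Prokhorov functional in closed form. By Theorem~\ref{THM_KF}, once the extended instance $(\utheta\pp,\underline{\bsigma}\pp)$ admits a standard distributional limit, the effective signal deficiency of the knockoff filter may be taken to be $\lim_{p\to\infty}\|(\uP\pp_{jj})_j\|_{\sf LP}$, where $\uP=\underline{\bsigma}^{-1}$ is the extended precision matrix. Thus the whole task reduces to understanding the empirical distribution of $\diag(\uP)$ when $\bs=s\bone$ with $s=a\lambda_{\min}(\bsigma)$, and then identifying the scale that governs \eqref{e_lp} with the representative $\lambda_{\max}(\bsigma)$ claimed in the statement (recall that ESD is defined only up to the vanish-together equivalence, so any representative in the correct equivalence class suffices).

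First I would exploit the block structure. With $\bs=s\bone$ the extended covariance \eqref{e_usigma} is $\underline{\bsigma}=\begin{bmatrix}\bsigma & \bsigma-s\bI\\ \bsigma-s\bI & \bsigma\end{bmatrix}$, whose four blocks all commute, being polynomials in $\bsigma$. Conjugating by the orthogonal involution $U=\tfrac{1}{\sqrt2}\begin{bmatrix}\bI&\bI\\ \bI&-\bI\end{bmatrix}$ block-diagonalizes it into $\diag(2\bsigma-s\bI,\ s\bI)$. Since $a\in(0,2)$ forces $2\bsigma-s\bI\succeq(2-a)\lambda_{\min}(\bsigma)\bI\succ0$, this simultaneously confirms positive definiteness and permits a direct inversion: $\uP=U\,\diag((2\bsigma-s\bI)^{-1},\ s^{-1}\bI)\,U^\top$, whose common diagonal block equals $\tfrac12\big[(2\bsigma-s\bI)^{-1}+s^{-1}\bI\big]$. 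Reading off the diagonal yields $\uP_{jj}=\tfrac12\big[((2\bsigma-s\bI)^{-1})_{jj}+s^{-1}\big]$ for every $j\in[2p]$.

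Next I would sandwich these entries. The term $s^{-1}$ is a constant shared by all coordinates and is fixed by $s=a\lambda_{\min}(\bsigma)$, while $0\prec(2\bsigma-s\bI)^{-1}\preceq\frac{1}{(2-a)\lambda_{\min}(\bsigma)}\bI$ shows the remaining term is nonnegative and bounded by the same extremal-eigenvalue scale. Hence every $\uP_{jj}$ lies in a fixed interval whose endpoints are $a$-dependent multiples of a single scale, so that $\tfrac1{2p}\sum_j\delta_{\uP_{jj}}$ is a bounded perturbation of a point mass at that scale. I would then insert this into the explicit form \eqref{e_lp}: for a measure concentrated near a positive value $c$, $\|(\cdot)\|_{\sf LP}=\min(c,1)$ up to the perturbation, so the functional is monotone in, and determined by, this extremal-eigenvalue scale. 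Matching it to the representative $\lambda_{\max}(\bsigma)$ identifies the ESD, and because the sandwich constants depend only on $a$ (not on $p$), every fixed $a\in(0,2)$ gives the same equivalence class, showing the choice of $a$ is immaterial.

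The main obstacle is this third step: controlling the L\'evy--Prokhorov distance of the \emph{full} empirical measure rather than of an idealized point mass. The common shift $s^{-1}$ is harmless precisely because it is identical across all $2p$ coordinates, but the spread of $((2\bsigma-s\bI)^{-1})_{jj}$ must be shown not to alter the equivalence class, i.e.\ that any law trapped between $\tfrac12 s^{-1}$ and $\tfrac12\big(s^{-1}+((2-a)\lambda_{\min}(\bsigma))^{-1}\big)$ has LP distance to $\delta_0$ comparable to that single scale. This is where I would lean on the standard-distributional-limit hypothesis, which guarantees weak convergence of $\tfrac1{2p}\sum_j\delta_{\uP_{jj}}$ and hence convergence of \eqref{e_lp}; that upgrades the ``comparable up to bounded factors'' bound into a genuine statement about the limiting functional and its vanishing, closing the argument.
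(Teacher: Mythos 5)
Your proposal is correct, and at its core it follows the same strategy as the paper: reduce, via the general-knockoff characterization of Section~\ref{sec_general} (Theorem~\ref{THM_KF} together with Proposition~\ref{PROP6}), to showing that every diagonal entry of the extended precision matrix $\uP$ is of order $1/\lambda_{\min}(\bsigma)$ uniformly in $j$, then evaluate \eqref{e_lp}. The linear algebra differs in a mild but pleasant way. The paper takes the Schur complement route, $(\uP_{[p]\times[p]})^{-1}=2\diag(\bs)-\diag(\bs)\bsigma^{-1}\diag(\bs)=2s\bI-s^2\bsigma^{-1}$, and sandwiches it as $a(2-a)\lambda_{\min}(\bsigma)\bI\preceq 2s\bI-s^2\bsigma^{-1}\preceq 2a\lambda_{\min}(\bsigma)\bI$ (the paper's displayed inequality, with $\lambda_{\min}(\bsigma)\bI$ on both sides, is an evident typo), which gives $\uP_{jj}\in\bigl[\tfrac{1}{2a\lambda_{\min}(\bsigma)},\tfrac{1}{a(2-a)\lambda_{\min}(\bsigma)}\bigr]$; your conjugation by $U$ instead block-diagonalizes $\underline{\bsigma}$ into $\diag(2\bsigma-s\bI,\,s\bI)$ and produces the \emph{exact} formula $\uP_{jj}=\tfrac12\bigl[((2\bsigma-s\bI)^{-1})_{jj}+s^{-1}\bigr]$, which is a bit more informative and yields the same two-sided bound. (Two small remarks: the rotation identity needs only the symmetric two-block structure of $\underline{\bsigma}$, not commutativity of the blocks, so that observation is superfluous; and once the uniform sandwich is in hand, \eqref{e_lp} is trapped between $\min(c_1/\lambda_{\min}(\bsigma),1)$ and $\min(c_2/\lambda_{\min}(\bsigma),1)$ with $c_1,c_2$ depending only on $a$, so your appeal to the standard distributional limit in the last step is not actually needed for the vanish-together conclusion.) One correction to your final ``matching'' sentence: the scale your computation produces is $1/\lambda_{\min}(\bsigma)=\lambda_{\max}(\bsigma^{-1})=\lambda_{\max}(\bP)$, which is exactly the representative stated in the paper's overview section; the theorem statement's ``$\lambda_{\max}(\bsigma)$'' is a typo, and it should not be matched literally, since $\min(1/\lambda_{\min}(\bsigma),1)$ and $\lambda_{\max}(\bsigma)$ do not vanish together in general (take $\bsigma=c_p\bI$ with $c_p\to\infty$: the former tends to $0$ while the latter diverges).
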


\section{Experimental results}
First consider the setting where $X_1,\dots,X_p\sim\mathcal{N}(0,1)$ and the conditional independence graph forms a binary tree.
The correlations between adjacent nodes are all equal to $0.5$.
Choose $k=100$ out of $p=1000$ indices uniformly at random as the support of $\theta$,
and set $\theta_j= 4.5$ for $j$ in the support.
Generate $n=1000$ independent copies of $(X,Y)$ in $Y= X^\top \theta +  \xi$ where $\xi\sim \mathcal{N}(0,n)$.

Figure~\ref{fig1}, left shows the box plots of the power and FDR for {\sc Equi-knockoff}, {\sc ASDP-knockoff},
    and {\sc CI-knockoff},
where $s$ is defined as in \eqref{e_equi} for {\sc CI-knockoff}.
The FDR is controlled at the target $q=0.1$ in all three cases.
The powers are not statistically significantly different, but the rough trend is $\pow_{\sf e}< \pow_{\sf a}< \pow_{\sf c}$.
We then compare the effective signal deficiency.
Note that in the current setting, $\var(\underline{X}_j|\underline{X}_{\neg j})\le 1$, and hence $\underline{\bP}_{jj}\ge 1$, for each $j=1,\dots,2p$, and we always have $\|(\underline{\bP}_{jj})_{j=1}^{2p}\|_{\sf LP}=1$ by definition \eqref{e_lp}, which cannot reveal any useful information for comparison.
To resolve this, we can scale down $\underline{\bP}_{jj}$ by a common factor before computing the LP distances, noting that it yields a  valid effective signal deficiency. 
Lacking a systematic way of choosing such a scaling factor, heuristically we  choose it as $2000$ 
so that the LP distances for the three algorithms are all ``bounded away from $0$ and $1$''.
We find that $d_{\sf LP, e}
\simeq 0.501, d_{\sf LP, a}\simeq 0.048$ and $d_{\sf LP, c}\simeq 0.002$ and their ordering matches the ordering of the powers.

\begin{figure*}[t!]
    \centering
    \begin{minipage}[b]{0.49\textwidth}
        \centering
        \includegraphics[width=\textwidth]{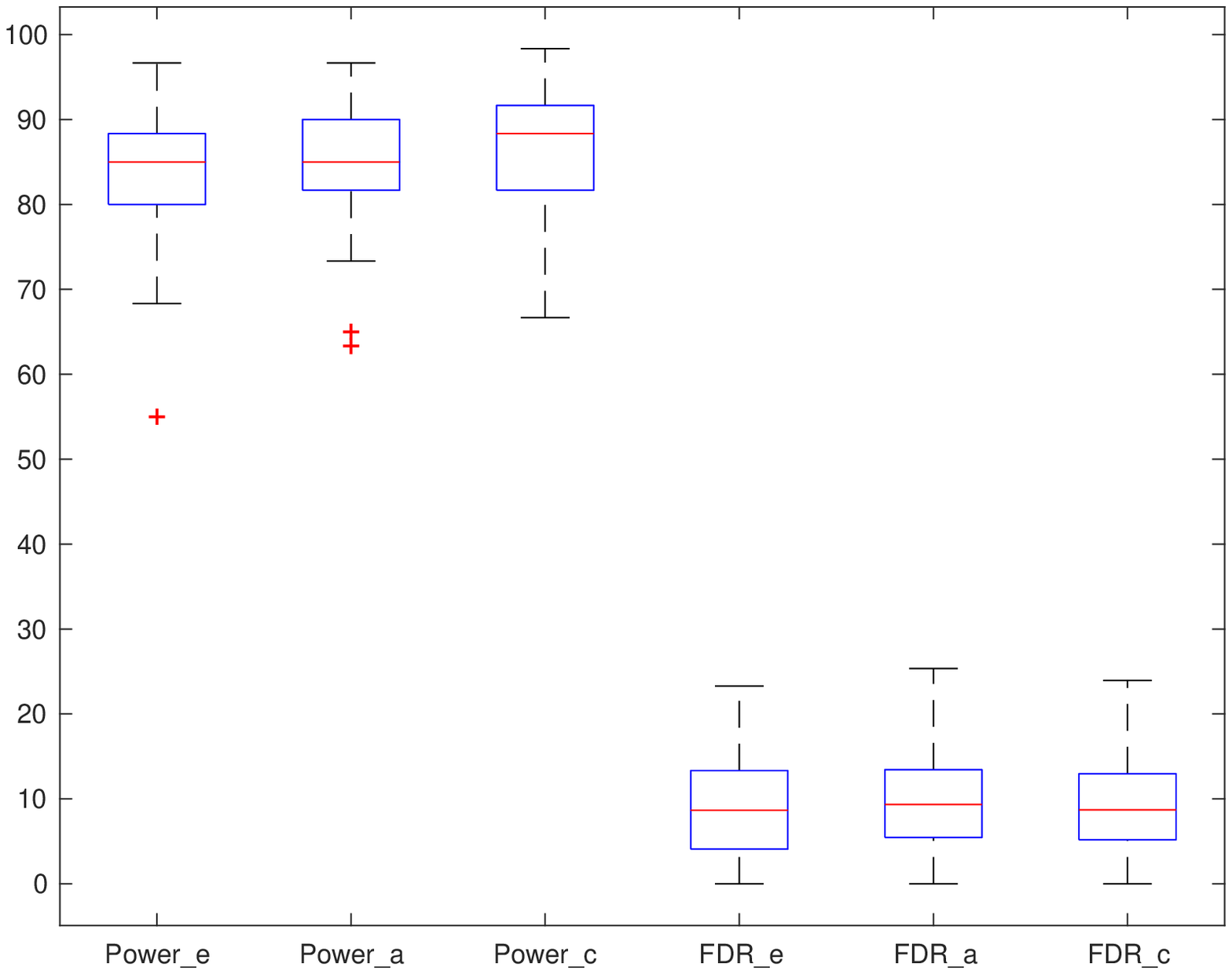}
    \end{minipage}%
    ~ 
    \begin{minipage}[b]{0.49\textwidth}
        \centering
        \includegraphics[width=\textwidth]{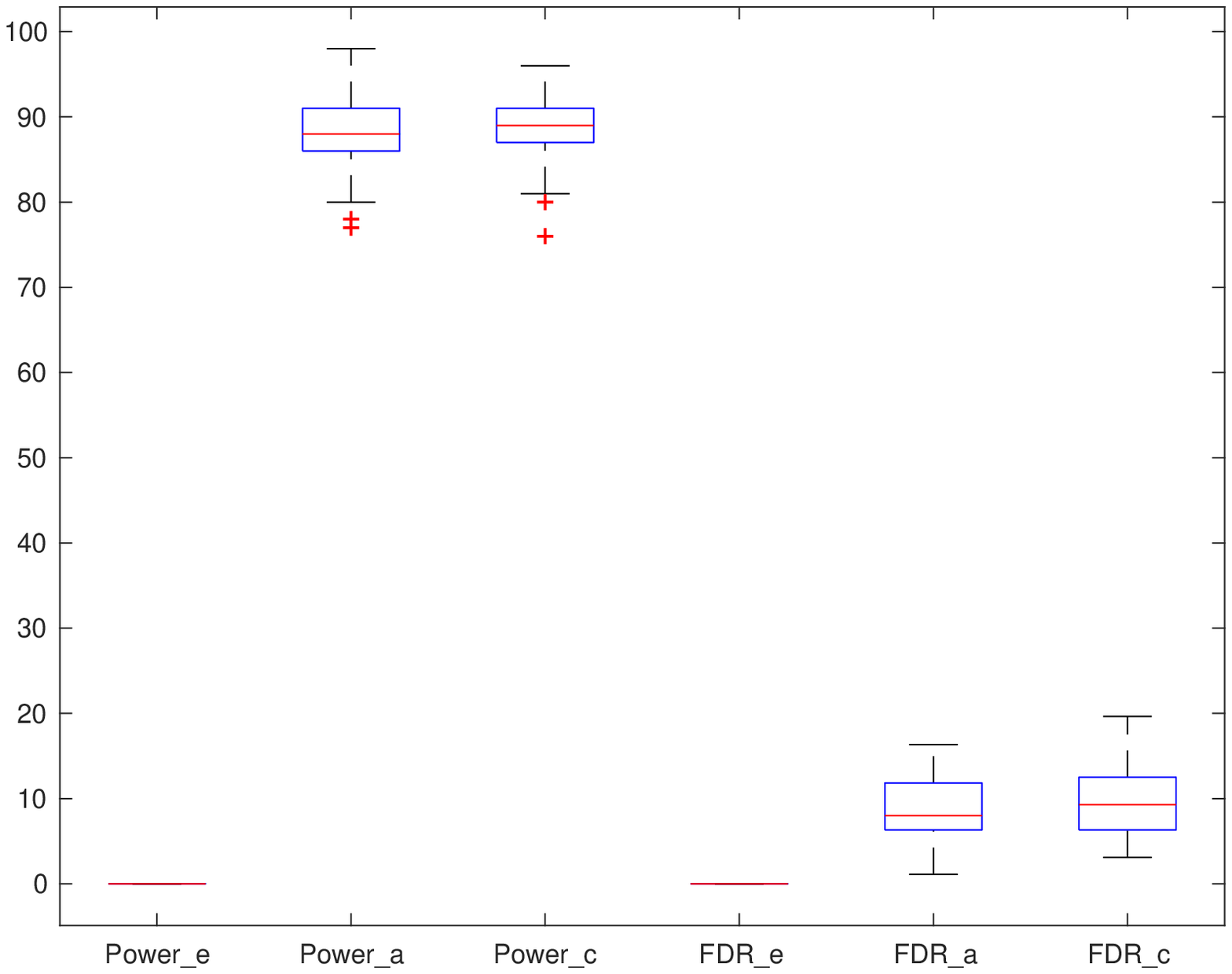}
    \end{minipage}
    \caption{Comparisons of {\sc Equi-knockoff}, {\sc ASDP-knockoff},
    and {\sc CI-knockoff}. Left: Binary tree, equal correlations.
Right: Markov chain, randomly chosen correlation strengths.}
\label{fig1}
\end{figure*}

In the previous example, the simplest {\sc equi-knockoff} has a highly competitive performance. 
However, this is an artifact of the fact that the data covariance is highly structured (i.e., correlations are all the same).
If the correlations have high fluctuations, 
and in particular, a small number of node pairs are highly correlated, 
then the equi-knockoff has a much worse performance. 
This is demonstrated in the next example. 
Consider the setting where $X_1,\dots,X_p$ forms a Markov chain,
in which $X_1,\dots,X_p\sim\mathcal{N}(0,1)$. In other words, the Gaussian graphical model is a path graph.
The correlation between $X_j$ and $X_{j+1}$ is $\rho_j:=G_j\1\{|G_j|\le 1\}$, where $G_j\sim\mathcal{N}(0,0.25)$, $j=1,\dots,p-1$ are chosen independently.
Choose $k=100$ out of $p=1000$ indices uniformly at random as the support of $\theta$,
and set $\theta_j= 4.5$ for $j$ in the support.
Generate $n=1200$ independent copies of $(X,Y)$ in $Y= X^\top \theta +  \xi$ where $\xi\sim \mathcal{N}(0,0.49n)$.

Figure~\ref{fig1} Right shows the box plots of the power and FDR for the knockoff filter with three different knockoff constructions.
The target FDR is $q=0.1$.
Since the correlations are now chosen randomly, 
with high probability
there exist highly correlated nodes, and hence $\lambda_{\min}(\bsigma)$ can be very small, 
in which case the equi-knockoff performs poorly.
However $\pow_{\sf c}$ is similar to $\pow_{\sf a}$, with the median of the former slightly higher.
To compare the ESD,
first scale down $\underline{\bP}_{jj}$ by a heuristically chosen factor 100.
We find  $d_{\sf LP, e}\simeq 0.9995$, 
 $d_{\sf LP, a}\simeq 0.8660$,
and $d_{\sf LP, c}\simeq 0.1075$
 and their ordering matches the ordering of the powers of the three knockoff constructions.



\newpage

\appendix

\section{Proof of Theorem~\ref{PROP_LASSO}}
Suppose that the algorithm selects $j$ such that $|\hat{\theta}_j^u|< t$ as nulls, for some threshold $t\in(0,\infty)$ to be specified later.
Let $\mathcal{H}_0:=\{j\colon \theta_{j}=0\}$ and $\mathcal{H}_1:=[p]\setminus \mathcal{H}_0$.
For any $s>0$ (independent of $p$), the asymptotic proportion of false negatives is bounded by
\begin{align}
\limsup_{p\to\infty}
\frac{1}{p}
|\{j\in\mathcal{H}_1,\,|\hat{\theta}_j^u|\le  t\}|
&\le 
\limsup_{p\to\infty}
\frac{1}{p}\left|\left\{j\colon |\theta_{j}|\ge s,
\left|\tau\cdot 
\frac{\hat{\theta}_j^u-\theta_{j}}{\tau}+\theta_{j}\right|\le t\right\}\right|
\nonumber\\
&\quad +
\limsup_{p\to\infty}
\frac{1}{p}|\{j\colon |\theta_{j}|\in(0,s]\}|\nonumber
\\
&\le 
\mathbb{P}[|\Theta|\ge s,|\tau\Upsilon^{1/2}Z+\Theta|\le t  ]
+\mathbb{P}[|\Theta|\in(0,s]] 
\label{e60}
\end{align}
almost surely, where the last step follows since the assumption $\limsup_{p\to\infty}\frac{1}{p}|\{j\colon |\theta_{j}|>0\}|=\mathbb{P}[|\Theta|>0]$ implies that 
$\limsup_{p\to\infty}
\frac{1}{p}|\{j\colon |\theta_{j}|\in(0,s]\}|
\le 
\limsup_{p\to\infty}
\frac{1}{p}|\{j\colon |\theta_{j}|>0\}|
-\liminf_{p\to\infty}
\frac{1}{p}|\{j\colon|\theta_{j}|>s\}|
\le \mathbb{P}[|\Theta|>0]-\mathbb{P}[|\Theta|>s]=\mathbb{P}[|\Theta|\in (0,s]]
$.
Taking $\lim{s\downarrow 0}$ in \eqref{e60} yields
\begin{align*}
\limsup_{p\to\infty}
\frac{1}{p}
|\{j\in\mathcal{H}_1,\,|\hat{\theta}_j^u|\le  t\}|
\le 
\mathbb{P}[|\Theta|>0,|\tau\Upsilon^{1/2}Z+\Theta|\le t  ]
\label{e62} 
\end{align*}
almost surely.
Note that the above inequality is also reversible: by similar lines of argument we obtain
\begin{align}
\liminf_{p\to\infty}
\frac{1}{p}
|\{j\in\mathcal{H}_1,\,|\hat{\theta}_j^u|\le  t\}|
\ge
\mathbb{P}[|\Theta|>0,|\tau\Upsilon^{1/2}Z+\Theta|< t  ]
\end{align}
almost surely.
Thus if $t$ is not a point of mass for $|\tau\Upsilon^{1/2}Z+\Theta|$, 
we have 
\begin{align}
\lim_{p\to\infty}
\frac{1}{p}
|\{j\in\mathcal{H}_1,\,|\hat{\theta}_j^u|\le  t\}|
=
\mathbb{P}[|\Theta|>0,|\tau\Upsilon^{1/2}Z+\Theta|< t]
\end{align}
almost surely.

The analysis of false positives is similar: 
assuming that $t$ is not a point of mass for $|\tau\Upsilon^{1/2}Z+\Theta|$,
and hence 
$ \mathbb{P}[\Theta=0, \tau|Z|\Upsilon^{1/2}= t]=0$,
we also have
\begin{align}
\lim_{p\to\infty}
\frac{1}{p}|\{j\in\mathcal{H}_0\colon |\hat{\theta}_j^u|
\ge t
\}|
&= \mathbb{P}[\Theta=0, \tau|Z|\Upsilon^{1/2}\ge t]
\label{e58}
\end{align}
almost surely.
Limiting expressions for the portions of total positives and total negatives can also be obtained similarly.

We thus obtain the following expressions of the FDR and POWER:
\begin{align}
\lim_{p\to\infty}\fdr\pp
&= \frac{\mathbb{P}[\Theta=0, \tau|Z|\Upsilon^{1/2}\ge t]}
{\mathbb{P}[|\tau \Upsilon^{1/2}Z+\Theta |\ge t]};
\\
\lim_{p\to\infty}\pow\pp
&=1-\frac{\mathbb{P}[|\Theta|>0, |\tau\Upsilon^{1/2}Z+\Theta|< t]}{\mathbb{P}[|\Theta|>0]}.
\end{align}
almost surely. 

To see that these quantities tend to 0 and to 1 respectively, note that the probability terms in the above can be further bounded as follows:
\begin{align*}
\mathbb{P}[\Theta=0, \tau|Z|\Upsilon^{1/2}\ge t]
&\le 
\mathbb{P}[ \tau|Z|\Upsilon^{1/2}\ge t]
\\
&\le 
\mathbb{P}\left[|Z|\ge \frac{t}{\tau\sqrt{L}}\right]
+ \mathbb{P}[\Upsilon> L]
\\
&\le 2{\rm Q} \left(\frac{t}{\tau\sqrt{L}}\right)
+ L;
\end{align*}

\begin{align*}
\mathbb{P}[|\tau \Upsilon^{1/2}Z+\Theta |\ge t]
&\ge 
\mathbb{P}[|\Theta|>2t]
-\mathbb{P}[\tau|Z|\Upsilon^{1/2}\ge t]
\\
&\ge 
\mathbb{P}[|\Theta|>2t]
-2{\rm Q} \left(\frac{t}{\tau\sqrt{L}}\right)
-L;
\end{align*}

\begin{align*}
\mathbb{P}[|\Theta|>0, |\tau\Upsilon^{1/2}Z+\Theta|< t]
&\le
\mathbb{P}[0<|\Theta|\le 2t]
+\mathbb{P}[\tau\Upsilon^{1/2}|Z|>t]
\\
&\le \mathbb{P}[0<|\Theta|\le 2t]+
2{\rm Q} \left(\frac{t}{\tau\sqrt{L}}\right)
+ L.
\end{align*}
In particular, taking $t=L^{1/4}$ shows the desired bounds on $\fdr\pp$ and $\pow\pp$ with
$$C_{L,\mu_{\Theta},\tau} 
=
\frac{2{\rm Q}(\frac{1}{\tau L^{1/4}})+L}{\mathbb{P}[|\Theta|>2L^{1/4}]-2{\rm Q}(\frac{1}{\tau L^{1/4}})-L}
+\frac{\mathbb{P}[0<|\Theta|\le 2L^{1/4}]+2{\rm Q}(\frac{1}{\tau L^{1/4}})+L}{\mathbb{P}[|\Theta|>0]}.
$$

{\bf Bound on $\tau$ for $\delta>1$}.
Recall from \cite[(37)]{javanmard2014hypothesis} that $\tau$ satisfies the equation
\begin{align}
\tau^2=\sigma^2+\frac{1}{\delta}
\lim_{p\to\infty}\frac{1}{p}\E[\|\eta_{\sf 1/d}(\theta+\tau\bsigma^{-1/2}{\bf Z})-\theta\|_{\bsigma}^2]
\label{e34}
\end{align}
where ${\bf Z}\sim \mathcal{N}({\bf 0, I})$, 
$\|{\bf y}\|_{\bsigma}^2:={{\bf y}^{\top}\bsigma{\bf y}}$ and 
$1/{\sf d}=1-\|\hat{\theta}\|_0/n\ge 1-1/\delta$. Moreover, the proximal operator $\eta_{1/\sf d}$ is defined by
\begin{align}
\eta_{1/\sf d}({\bf y})
:=\argmin_{\bt\in\mathbb{R}^p}\left\{
\frac{1}{2\sf d}\|\bt-{\bf y}\|_{\bsigma}^2+\lambda\|\bt\|_1
\right\}.
\end{align}
Write $\xi:=\tau\bsigma^{-1/2}{\bf Z}$. 
By optimality of $\eta_{1/\sf d}(\theta+\tau\bsigma^{-1/2}{\bf Z})$, we have
$$
\|\eta_{1/\sf d}(\theta+\xi)-(\theta +\xi) \|_{\bsigma}^2+ 2\sd\lambda \|\eta_{1/\sd}(\theta +\xi)\|_1 \le \|\theta-(\theta +\xi) \|_{\bsigma}^2+ 2\sd\lambda \|\theta\|_1\,,
$$
which implies that
\begin{align*}
\|\eta_{1/\sf d}(\theta+\xi)-\theta \|_{\bsigma}^2&\le 2\sd\lambda \|\theta\|_1
-2\langle \eta_{1/\sd}(\theta +\xi)-\theta, \xi\rangle_{\bsigma}\\
&\le  2\sd\lambda \|\theta\|_1+2\| \eta_{1/\sd}(\theta +\xi)-\theta\|_{\bsigma}\cdot\| \xi\|_{\bsigma}
\end{align*}
where we define the inner product $\langle \bx, \by\rangle_{\bsigma}:=\bx^\top \bsigma \by$ and we applied the Cauchy-Schwarz inequality. It yields
$$
\|\eta_{1/\sf d}(\theta+\xi)-\theta \|_{\bsigma}^2 \le 4\sd\lambda \|\theta\|_1\vee 16\| \xi\|_{\bsigma}^2
$$
Since $\E[\| \xi\|_{\bsigma}^2]=p$, we get from~\eqref{e34} together with the fact that $\sd\le \delta/(\delta-1)$ that 
$$
\tau^2\le \sigma^2 + \frac{4\lambda \beta}{\delta-1}+\frac{16}{\delta}\,.
$$

\section{Proof of Proposition~\ref{PROP:LBLASSO}}

It suffices to show that for given $\sigma$, $L$, and the distribution of $\Theta$,
we have
\begin{align}
\inf_{\substack{
\Upsilon\colon \|\Upsilon\|_{LP}\ge L \\
\lambda,t}}
\max\{\mathbb{P}[\Theta=0,\tau\Upsilon^{1/2}|Z|\ge t],
\mathbb{P}[|\Theta|>0,|\tau\Upsilon^{1/2}Z+\Theta|<t]\}>0.
\label{e72}
\end{align}
If we assume that $\Theta$ and $\Upsilon$ are independent, 
then $\Theta$, $\Upsilon$ and $Z$ are mutually independent.
We have 
\begin{align}
\mathbb{P}[\Theta=0,\tau\Upsilon^{1/2}|Z|\ge t]
&= \mathbb{P}[\Theta=0]
\mathbb{P}[\tau\Upsilon^{1/2}|Z|\ge t]
\label{e73}
\\
&\ge \mathbb{P}[\Theta=0]
\mathbb{P}[\sigma\Upsilon^{1/2}|Z|\ge t]
\end{align}
where we used the fact that $\tau\ge \sigma$; and
\begin{align}
\mathbb{P}[|\Theta|>0,|\tau\Upsilon^{1/2}Z+\Theta|<t]
&\ge
\mathbb{P}[|\Theta|\in (0,t/2)]
\cdot
\mathbb{P}[\tau\Upsilon^{1/2}|Z|<t/2]
\\
&\ge \mathbb{P}[|\Theta|\in (0,t/2)]
\cdot
\frac{1}{2}\mathbb{P}[\tau\Upsilon^{1/2}|Z|<t]
\label{e75}
\end{align}
where \eqref{e75} is because the density of $\tau\Upsilon^{1/2}Z$ on $[0,t/2)$ is larger than the density on $[t/2,t)$.
Given $\lambda$, $t$ and the law of $\Upsilon$, 
let $I$ denote the max on the left side of \eqref{e72}; we see from \eqref{e73} that
\begin{align}
\mathbb{P}[\tau\Upsilon^{1/2}|Z|\ge t] \le \frac{I}{1-\alpha}.
\label{e77}
\end{align}
Hence from \eqref{e75} and \eqref{e77}, 
and using the fact that the far left side of \eqref{e75} is upper bounded by $I$, 
we have
\begin{align}
\max\{\mathbb{P}[\sigma\Upsilon^{1/2}|Z|\ge t],\,
\mathbb{P}[|\Theta|\in(0,t/2)]\}
&\le 
\max\left\{\frac{I}{1-\alpha},\,
\frac{I}{1-\alpha}\mathbb{P}[|\Theta|\in(0,t/2)]+2I
\right\}
\\
&\le
\frac{3-\alpha}{1-\alpha}I.
\label{e79}
\end{align}
Substituting 
\begin{align}
\mathbb{P}[\sigma\Upsilon^{1/2}|Z|\ge t]
\ge \mathbb{P}[\Upsilon\ge L]
\mathbb{P}\left[|Z|\ge \frac{t}{\sigma L^{1/2}}\right]
= 2L \cdot {\rm Q}\left(\frac{t}{\sigma L^{1/2}}\right)
\end{align}
into \eqref{e79}, we have
\begin{align}
I\ge \frac{1-\alpha}{3-\alpha}\max\left\{2L\cdot {\rm Q}\left(\frac{t}{\sigma L^{1/2}}\right),\,
\mathbb{P}[|\Theta|\in (0,t/2)]\right\}.
\end{align}
We see that Proposition~\ref{PROP:LBLASSO} is true with
\begin{align}
c_{L,\sigma,\mu_{\Theta}}
=
\frac{2-2\alpha}{3-\alpha}
\inf_{t>0}\max\left\{2L\cdot {\rm Q}\left(\frac{t}{\sigma L^{1/2}}\right),\,
\mathbb{P}[|\Theta|\in (0,t/2)]\right\}>0.
\end{align}

\section{Proof of Theorem~\ref{THM_KF}}

According to the definition of the standard distributional limit, there exist $\tau\neq 0$ such that with probability 1, the empirical distribution of $\{\left((\hat{\underline{\theta}}_j^u-\underline{\theta}_{j})/\tau, \,(\underline{\bsigma}^{-1})_{jj}\right)\}_{j=1}^{2p}$
(which is random since $\bf Y$ and $\bf X$ are random) convergences weakly to the distribution of $(\underline{\Upsilon}^{1/2}Z,\underline{\Upsilon})$ where $Z\sim \mathcal{N}(0,1)$ is independent of $\underline{\Upsilon}$.

Since for any number $t'$, $\Delta_j:=|\hat{\underline{\theta}}_j^u|-|\hat{\underline{\theta}}_{j+p}^u|\le -t'$ implies $t'\le |\hat{\underline{\theta}}_{j+p}^u|=|\hat{\underline{\theta}}_{j+p}^u-\underline{\theta}_{j+p}|$,
we have 
\begin{align}
 \limsup_{p\to\infty}\frac{1}{2p}|\{j\in[p]\colon \Delta_j\le -t'\}|
 &\le
\limsup_{p\to\infty}\frac{1}{2p}|\{j\in[p]\colon |\hat{\underline{\theta}}_{j+p}^u-\underline{\theta}_{j+p}|\ge t'\}|
\label{e_62}
\\
&\le 
\limsup_{p\to\infty}\frac{1}{2p}|\{j\in[2p]\colon |\hat{\underline{\theta}}_j^u-\underline{\theta}_j|\ge t'\}|
\\
&\le 
\mathbb{P}[\tau\underline{\Upsilon}^{1/2}|Z|\ge t']
\\
&\le \mathbb{P}[\underline{\Upsilon} > L]+\mathbb{P}[|Z|\ge t'/\tau L^{1/2}]
\\
&\le L+2{\rm Q}(t'/\tau L^{1/2}),
\label{e66}
 \end{align}
almost surely. But
 \begin{align}
 |\{j\in \mathcal{H}_1\colon \Delta_j\ge t'\}|
& \ge |\mathcal{H}_1|
 -|\{j\in\mathcal{H}_1\colon \Delta_j\le t'\}|
 \\
& \ge|\mathcal{H}_1|
-|\{j\in\mathcal{H}_1\colon |\hat{\underline{\theta}}_j^u|\le 2t'\}|
-|\{j\in\mathcal{H}_1\colon |\hat{\underline{\theta}}_{j+p}^u|\ge t'\}|
\label{e68}
 \end{align}
where we recall that $\mathcal{H}_1:=\{j\colon\theta_{j}\neq 0\}$.
Using the same argument as \eqref{e62},
we have $\limsup_{p\to\infty}\frac{1}{2p}|\{j\in\mathcal{H}_1\colon |\hat{\underline{\theta}}_j^u|\le 2t'\}|\le \mathbb{P}[|\underline{\Theta}|>0,|\tau\underline{\Upsilon}^{1/2}Z+\underline{\Theta}|\le 2t']
\le \mathbb{P}[0< |\underline{\Theta}|\le 2t']+2{\rm Q}(2t'/\tau\sqrt{L})+L$, almost surely;
and by similar lines as \eqref{e_62}-\eqref{e66}, we have
$\limsup_{p\to\infty}\frac{1}{2p}|\{j\in\mathcal{H}_1\colon |\hat{\underline{\theta}}_{j+p}^u|\ge t'\}|
\le L+2{\rm Q}(t'/\tau L^{1/2})$, almost surely.
Substituting these into \eqref{e68}, we obtain
\begin{align}
\limsup_{p\to\infty}\frac{1}{2p}|\{j\in\mathcal{H}_1\colon \Delta_j\ge t'\}|
\ge 
\alpha/2-2L-4{\rm Q}(t'/\tau L^{1/2})-\mathbb{P}[|\underline{\Theta}|\in(0,2t']]
\label{e_68}
\end{align}
almost surely.
Recall that
\begin{align}
T:=\min\left\{t\colon \frac{|\{j\in[p]\colon \Delta_j\le -t\}|}{|\{j\in[p]\colon \Delta_j\ge t\}|\vee 1}\le q\right\};
\end{align}
if we set $t'=L^{1/4}$, then the assumption
\begin{align}
\frac{L
+2{\rm Q}(1/\tau L^{1/4})}
{\alpha/2-2L
-4{\rm Q}(1/\tau L^{1/4})
-\mathbb{P}[|\underline{\Theta}|\in(0,2L^{1/4}]]}
<q,
\label{e_q}
\end{align}
implies that $T\le t'$ for $p$ large enough, almost surely.
Thus the number of true positives using the data dependent threshold $T$ is larger than the number of true positives using the threshold $t'$. 
The claim in the theorem then follows from \eqref{e_68}, and we can set $C_{L,q,\tau,\mu_{\underline{\Theta}}}=1$ if $q$ violates \eqref{e_q} and 
$$
C_{L,q,\tau,\mu_{\underline{\Theta}}}
=
1-\frac{4L}{\alpha}
-\frac{8}{\alpha}
{\rm Q}(\frac{1}{\tau L^{1/4}})
-\frac{2}{\alpha}\mathbb{P}[|\underline{\Theta}|\in(0,2L^{1/4}]]
$$
otherwise.

\section{Proof of Proposition~\ref{PROP6}}
The ``suitable distributional limit assumption'' in Proposition~\ref{PROP6} is the following strengthening of the assumption in Definition~\ref{defn_sdl}:
\\
{\bf Assumption:} Let $\lambda\ge 0$, $\delta>0$ be fixed.
The sequence $\{(\underline{\bsigma}\pp,\underline{\theta}\pp)\}_{p\ge 1}$ has the following property: 
there exist $\tau$ deterministic and ${\sf d}$ possibly random, such that the empirical measure
$$
\frac{1}{p}
\sum_{j=1}^p \delta_{\left(
\underline{\theta}_j,\,
\frac{\hat{\underline{\theta}}_j^u-\underline{\theta}_j}{\tau},\,
\frac{\hat{\underline{\theta}}_{j+p}^u-\underline{\theta}_{j+p}}{\tau},\,
\underline{\bf P}_{(j,j+p)(j,j+p)}
\right)\pp}$$
converges almost surely to a probability measure $\nu$ on $\R^7$.
Here $\underline{\bf P}_{(j,j+p)(j,j+p)}$ denotes the $2\times 2$ submatrix of $\underline{\bf P}$ sampled at rows $(j,j+p)$ and columns $(j,j+p)$.
Moreover, $\nu$ is the probability distribution of $(\Theta,{\bf \Upsilon}^{1/2}{\bf Z}, {\bf \Upsilon})$
where ${\bf Z}\sim \mathcal{N}({\bf 0},{\bf I}_2)$, and $(\Theta, {\bf \Upsilon})$
is independent of $\bf Z$.

Note that we do not underline $\Theta$ and $\Upsilon$ in the above since they represent the empirical distributions of $p$ masses instead of $2p$ masses.

The assumption above supported by the replica heuristics, which suggests that $\hat{\underline{\theta}}^u-\underline{\theta}$ can be treated as $\tau\underline{\bsigma}^{-1/2}{\bf Z}$ where ${\bf Z}\sim \mathcal{N}({\bf 0},{\bf I}_{2p})$ (see the discussion in \cite{javanmard2014hypothesis}).
This approximation does not hold in the sense of convergence of distributions in $\R^{2p}$,
but in the sense of the convergence of the empirical distribution of the projections of this $p$-vector on low dimensional subspaces.
For example, the standard distributional limit assumption in Definition~\ref{defn_sdl} concerns the case of $1$-dimensional projections (coordinate projections) of $\hat{\underline{\theta}}^u-\underline{\theta}$;
the assumption above is a stronger version about $2$-dimensional coordinate projections.

We also note that the assumption above can be rigorously justified in the case of $\delta>2$, $\lambda=0$ (least squares), and block diagonal $\underline{\bsigma}$.
Indeed, the least square estimate reads as
\begin{align}
\hat{\theta}
&=
({\bf \underline{X}^{\top}\underline{X}})^{-1}{\bf \underline{X}}^{\top}{\bf Y}
\\
&=
\theta+({\bf \underline{X}^{\top}\underline{X}})^{-1}{\bf \underline{X}}^{\top}\xi
\end{align}
Thus 
\begin{align}
\underline{\bsigma}^{1/2}
(\hat{\theta}-\theta)
=({\bf N}^{\top}{\bf N}){\bf N}^{\top}\xi
\end{align}
where ${\bf N}$ is an $n\times 2p$ matrix with i.i.d.\ $\mathcal{N}(0,1)$ entries.
From here we see that the distribution of $\underline{\bsigma}^{1/2}
(\hat{\theta}-\theta)$ is invariant under rotation.
Moreover, it can be shown that $\|({\bf N}^{\top}{\bf N}){\bf N}^{\top}\xi\|_2$ concentrates,
and from the fact that any bounded number of coordinates of a uniformly random vector on the sphere has asymptotic Gaussian distribution, we see that the same is true for any bounded number of coordinates.
We can then verify the claimed Gaussian convergence result for block diagonal $\underline{\bsigma}$.
(Heuristically, we expect that the claim holds when $\bsigma^{-1/2}$ contains sufficiently many incoherent rows.)

From now on, we accept the validity of the above assumption.
The main task in the proof of Proposition~\ref{PROP6} is to show that the data-dependent threshold $T=\Omega(1)$ almost surely.
For this, we need to lower bound 
\begin{align}
\frac{1}{p}|\{j\in[p]\colon |\hat{\underline{\theta}}_j^u|
-
|\hat{\underline{\theta}}_{j+p}^u|<-t'
\}|
=\Omega(1)
\label{e_fp}
\end{align}
for a $t'=\Omega(1)$.
Define 
\begin{align}
(U,V)^{\top}
:=\tau{\bf \Upsilon}^{1/2}{\bf Z};
\end{align}
note that in particular $U$ and $V$ are Gaussian with the same marginal conditioned on $\Upsilon$.
Then we see that the left side of \eqref{e_fp} converges to
$
\mathbb{P}[|\Theta+U|
-|V|<-t']
$.
Note that it is not possible to show 
\begin{align}
\mathbb{P}[\Theta=0,|\Theta+U|
-|V|<-t']
=\Omega(1)
\label{e_ntrue}
\end{align}
since Proposition~\ref{PROP6} only imposes 
lower bounds on the diagonals of $\bf \Upsilon$;
in the extreme case where $\Upsilon$ is a diagonal matrix then $U=V$ and \eqref{e_ntrue} is certainly not true.
However, we will be able to show that
\begin{align}
\mathbb{P}[\Theta=1,|\Theta+U|
-|V|<-t']
=\Omega(1)
\end{align}
Indeed, put $t'=1$,
and using the independence of $\Theta$ and $(U,V)$ we have
\begin{align}
&\quad \mathbb{P}[\Theta=1,|\Theta+U|
-|V|<-1]
\nonumber\\
&=
\mathbb{P}[\Theta=1]
\mathbb{P}[|1+U|
-|V|<-1]
\\
&\ge
\alpha
\mathbb{P}[U<-1,|V|\ge -U]
\\
&\ge 
\alpha\left(
\mathbb{P}[U<-1,|V|\ge -U,\Upsilon_{12}>0]
+\mathbb{P}[U<-1,|V|\ge -U,\Upsilon_{12}\le 0]
\right)
\end{align}
where $\Upsilon_{12}$ denotes the off-diagonal entry in the matrix ${\bf \Upsilon}$.
Then,
\begin{align}
\mathbb{P}[U<-1,|V|\ge -U,\Upsilon_{12}>0]
&=\mathbb{E}
\left[
\mathbb{P}[U<-1,|V|\ge -U]\1\{\Upsilon_{12}>0\}
\right]
\\
&=
\frac{1}{2}
\mathbb{E}
\left[
\mathbb{P}[U<-1,V<-1]\1\{\Upsilon_{12}>0\}
\right]
\label{e_sym}
\\
&\ge
\frac{1}{2}
\mathbb{E}
\left[
\mathbb{P}[U<-1,V<-1]\1\{\Upsilon_{12}=0\}
\right]
\label{e_slepian}
\\
&=\frac{1}{2}
\mathbb{E}\left[{\rm Q}^2(\frac{1}{\tau\sqrt{\Upsilon_{11}}})\right]
\\
&\ge \frac{L}{2}{\rm Q}^2(\frac{1}{\tau\sqrt{L}})
\end{align}
where \eqref{e_sym} can be seen from the symmetry in the distribution of $(U,V)$;
\eqref{e_slepian} follows from Slepian's Lemma \cite{slepian}.
Exactly the same bound can be obtained for $
\mathbb{P}[U<-1,|V|\ge -U,\Upsilon_{12}\le 0]
$ by similar steps, and hence
\begin{align}
\mathbb{P}[\Theta=1,|\Theta+U|
-|V|<-1]
\ge \alpha L 
{\rm Q}^2(\frac{1}{\tau\sqrt{L}}).
\label{e_fpr}
\end{align}
Now to lower bound $T$, note
\begin{align}
\inf_{0\le t\le 1}
\left\{
\frac{\#\{j\in[p]\colon \Delta_j\le -t\}}{\#\{j\in[p]\colon \Delta_j\ge t\}\vee 1}
\right\}
&\ge 
\inf_{0\le t\le 1}
\left\{
\frac{1}{p}\#\{j\in[p]\colon \Delta_j\le -t\}
\right\}
\\
&\ge
\frac{1}{p}\#\{j\in[p]\colon \Delta_j\le -1\}
\\
&\ge 
\alpha L 
{\rm Q}^2(\frac{1}{\tau\sqrt{L}})
\end{align}
where the last step holds asymptotically almost surely, in view of  \eqref{e_fpr}.
Thus $T\ge 1$ asymptotically almost surely
when 
\begin{align}
 q< \alpha L 
{\rm Q}^2(\frac{1}{\tau\sqrt{L}}).  
\label{e_cond}
\end{align}
Under the condition \eqref{e_cond},
we upper bound the power asymptotically almost surely:
\begin{align}
\pow\pp   
&\le 
\frac{\mathbb{P}[\Theta=1,|\Theta+U|-|V|\ge 1]}{\mathbb{P}[\Theta=1]}
\\
&=
\mathbb{P}[|1+U|-|V|\ge 1]
\\
&=
1-\mathbb{P}[|1+U|-|V|\le 1]
\\
&\le 
1-
\mathbb{P}[1+U\ge 0,
|V|\ge U]
\\
&\le
1-
\mathbb{P}[U\ge 0,
|V|\ge U]
\\
&=3/4
\end{align}
where the last step used the symmetry of the distribution of $(U,V)$.

\section{Proofs in Section~\ref{SEC_COND}}
\begin{proof}[Proof of Theorem~\ref{thm_tree}]
From linear algebra, we see that a necessary and sufficient condition such that \eqref{e9} fulfills the positive semidefinite condition for the joint covariance matrix is that 
\begin{align}
2\diag(\diag(\bsigma^{-1}))-\bsigma^{-1}\succeq 0
\end{align}
In other words, we want the precision matrix to maintain p.s.d.\ after flipping the signs of the off-diagonals.
This is true in the diagonally dominant case.

Using Hammersley theorem we know that the nonzero pattern of the precision matrix (inverse of the covariance matrix) corresponds to the connectivity graph of the graphical model, which is a tree in the current case.
The claim then follows from Lemma~\ref{lem_la} below.
\end{proof}

\begin{lem}\label{lem_la}
$\bf P$ is a square matrix and the nonzero pattern of $\bf P$ corresponds to a forest (a union of trees), then $\bf P$ and $2\diag(\bf P)-\bf P$ have the same set of eigenvalues.
\end{lem}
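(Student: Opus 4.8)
The plan is to realize $2\diag(\bP)-\bP$ as a similarity transform of $\bP$ by a diagonal sign matrix, so that the two share a spectrum automatically. First I would observe that $2\diag(\bP)-\bP$ has the same diagonal as $\bP$ (the diagonal of $2\diag(\bP)$ is twice that of $\bP$, and subtracting $\bP$ restores it) while every off-diagonal entry is negated. Since the nonzero pattern of $\bP$ is supported on a forest, the only off-diagonal entries that can be nonzero are those indexed by edges $\{i,j\}$ of the forest; thus the net effect of the map $\bP \mapsto 2\diag(\bP)-\bP$ is precisely to flip the sign of $\bP_{ij}$ for each edge $\{i,j\}$, leaving all diagonal entries fixed.

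Next I would construct a sign vector $d \in \{+1,-1\}^n$ with $d_i d_j = -1$ for every edge $\{i,j\}$, and set $D := \diag(d)$. Conjugating by $D$ gives $(D\bP D)_{ij} = d_i d_j \bP_{ij}$, which leaves diagonal entries unchanged (since $d_i^2 = 1$) and multiplies each off-diagonal entry by $d_i d_j$. Hence for the chosen $d$ one gets $D\bP D = 2\diag(\bP)-\bP$ exactly. Because $D^2 = \bI$, we have $D = D^{-1}$, so $D\bP D$ is a genuine similarity transformation of $\bP$ and therefore has the same set of eigenvalues; this is the whole claim.

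The only nontrivial point, and the step I would flag as the crux, is the existence of such a $d$: a $\{+1,-1\}$-labeling of the vertices that disagrees across every edge. This is exactly a proper $2$-coloring of the graph, which exists if and only if the graph is bipartite. The structural fact I would invoke is that every forest is bipartite (it contains no cycles, hence no odd cycles), so the coloring always exists: root each tree component, assign $+1$ to even-depth vertices and $-1$ to odd-depth vertices, and treat disjoint components independently. With $d$ in hand the remaining steps are immediate, so this bipartiteness observation is where the tree/forest hypothesis is genuinely used and is the heart of the argument.
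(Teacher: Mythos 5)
Your proof is correct, but it takes a genuinely different route from the paper's. The paper argues via the characteristic polynomial: ordering the vertices so that vertex $1$ is a leaf whose unique neighbor is vertex $2$, it expands
\begin{align}
\det(\lambda\bI_p-\bP)
=(\lambda-P_{11})\det\big(\lambda\bI_{p-1}-P_{[2:p]\times[2:p]}\big)
-P_{12}^2\det\big(\lambda\bI_{p-2}-P_{[3:p]\times[3:p]}\big),
\end{align}
notes that the two principal submatrices again have forest patterns, and concludes by induction that the off-diagonal entries enter the characteristic polynomial only through their squares, so negating them leaves the spectrum unchanged. You instead exhibit an explicit similarity: a forest is bipartite, so a proper $2$-coloring $d\in\{\pm1\}^p$ exists, and with $D=\diag(d)$ one gets $D\bP D=2\diag(\bP)-\bP$ with $D=D^{-1}$. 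Your route buys three things: it avoids the induction entirely; it proves a strictly stronger lemma, since only bipartiteness of the support graph is used (any graph with no odd cycles, e.g.\ an even cycle, works), whereas the paper's leaf expansion is genuinely tied to forests; and it shows the two matrices are similar --- indeed, since $D^\top=D=D^{-1}$, simultaneously similar and congruent --- which yields equality of characteristic polynomials with multiplicity and makes the preservation of positive semidefiniteness invoked in Theorem~\ref{thm_tree} immediate. The paper's expansion, for its part, requires no graph-theoretic preparation beyond the existence of a leaf, and adapts verbatim to patterns where $P_{ij}$ and $P_{ji}$ differ (replacing $P_{12}^2$ by $P_{12}P_{21}$), a case your conjugation also handles without change.
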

\begin{proof}
Assume without loss of generality that the first entry corresponds to a leaf and the second entry corresponds to its unique neighbor.
We can expand the determinant to check that the characteristic polynomial satisfies
\begin{align}
\det(\lambda {\bf I}_p-{\bf P})
=(\lambda-P_{11})\det(\lambda{\bf I}_{p-1}-P_{[2:n]\times[2:p]})
-P_{12}^2\det(\lambda{\bf I}_{p-2}-P_{[3:p]\times[3:p]})
\end{align}
where $p$ is the size of $\bf P$, and $P_{[2:p]\times[2:p]}$ denotes the principle submatrix of $\bf P$ consisting of entries of $\bf P$ with indices in $\{2,\dots,p\}\times \{2,\dots,p\}$.
Note that $P_{[2:p]\times[2:p]}$ and $P_{[3:p]\times[3:p]}$ also correspond to forrests.
By induction, we see that the off-diagonal coefficients enter the characteristic polynomial only through their squares. 
In other words, the characteristic polynomial is unchanged after flipping the signs of the off-diagonals.
\end{proof}


\begin{proof}[Proof of Theorem~\ref{thm_cond}]
The $\{1,\dots,p\}\times \{1,\dots,p\}$-submatrix of the  precision matrix satisfies
\begin{align}
(\underline{\bf P}_{[p]\times[p]})^{-1}
&=2\diag(\bs)-\diag(\bs)\bsigma^{-1}\diag(\bs)
\\
&=2\diag^{-1}({\bf P})-\diag^{-1}({\bf P}){\bf P}\diag^{-1}({\bf P})
\end{align}
where $s_j={P_{jj}}^{-1}$, $j=1,\dots,p$ in the case of conditional expectation knockoff.
Note that $2\diag^{-1}({\bf P})-\diag^{-1}({\bf P}){\bf P}\diag^{-1}({\bf P})$ and $\diag^{-1}({\bf P}){\bf P}\diag^{-1}({\bf P})$ have the same diagonals, but the off-diagonals are of the opposite signs and equal absolute values. 
When $\bf P$ is assumed to be associated with a tree,
these two matrices have the same spectral, and in particular, have the same determinant.
By the same reasoning, all their principal minors have the same determinant. 
Therefore the 
\begin{align}
\diag(\underline{\bf P}_{[p]\times[p]})
&=\diag^{-1}({\bf P})\diag({\bf P}^{-1})\diag^{-1}({\bf P})
\\
&=\diag^{-1}({\bf P})\diag(\bsigma)\diag^{-1}({\bf P}).
\end{align}
\end{proof}

\begin{proof}[Proof of Theorem~\ref{thm_equi}]
Recall that
$
(\underline{\bf P}_{[p]\times[p]})^{-1}
=2\diag(\bs)-\diag(\bs)\bsigma^{-1}\diag(\bs)
$.
In the case of equi-knockoff, one selects $s_j=\lambda_{\min}(\bsigma)$, and we have
\begin{align}
\lambda_{\min}(\bsigma){\bf I}
\preceq
2\diag(\bs)-\diag(\bs)\bsigma^{-1}\diag(\bs)
\preceq
\lambda_{\min}(\bsigma){\bf I}.
\end{align}
\end{proof}

\section{Notes on the experiments}
Our code is built upon the knockoff software on Emmanuel Cand\`es's website,
\begin{quote}
    \verb+https://web.stanford.edu/group/candes/knockoffs/software/knockoffs/+
\end{quote}
with the slight modification that $\Delta_j$ is computed using the unbiased coefficients (see Section~\ref{sec_general}).

It is worth mentioning that the code chooses the Lasso parameter $\lambda$ via cross validation,
whereas our theoretical analysis chooses any $\lambda$ independent of $p$.

\bibliographystyle{alphaabbr}
\bibliography{KO}

\end{document}